\newcommand{\biggg}{\bBigg@{3}}
\newcommand{\Biggg}{\bBigg@{3.5}}
\newcommand{\bigggg}{\bBigg@{5}}
\def\biggggr{\mathclose\bigggg}
\newcommand{\Bigggg}{\bBigg@{6.5}}
\def\Biggggr{\mathclose\Bigggg}
\newtheorem{theorem}{Theorem}
\newtheorem{lemma}{Lemma}
\newtheorem{remark}{Remark}
\newtheorem{assumption}{Assumption}
\newtheorem{problem}{Problem}
\begin{document}

\title{\hspace*{-2.8mm} \LARGE \bf Optimal Output Consensus of Second-Order Uncertain Nonlinear Systems on Weight-Unbalanced Directed Networks}

\author{Jin~Zhang,
        Lu~Liu,~\IEEEmembership{Senior Member, IEEE,}
        and~Haibo~Ji
\thanks{J. Zhang is with the Department of Automation, University of Science and Technology of China, Hefei 230027, China, and also with the Department of Biomedical Engineering, City University of Hong Kong, Hong Kong (e-mail: zj55555@mail.ustc.edu.cn).
	
	L. Liu is with the Department of Biomedical Engineering, City University of Hong Kong, Hong Kong (e-mail: luliu45@cityu.edu.hk).
	
	H. Ji is with the Department of Automation, University of Science and Technology of China, Hefei 230027, China (e-mail: jihb@ustc.edu.cn).}}

\maketitle

\begin{abstract}
	This paper investigates the distributed optimal output consensus problem of second-order uncertain nonlinear multi-agent systems over weight-unbalanced directed networks. Under the standard assumption that local cost functions are strongly convex with globally Lipschitz gradients, a novel distributed dynamic state feedback controller is developed such that the outputs of all the agents reach the optimal solution to minimize the global cost function which is the sum of all the local cost functions. The controller design is based on a two-layer strategy, where a distributed optimal coordinator and a reference-tracking controller are proposed to address the challenges arising from unbalanced directed networks and uncertain nonlinear functions respectively. A key feature of the proposed controller is that the nonlinear functions containing the uncertainties and disturbances are not required to be globally Lipschitz. Furthermore, by exploiting adaptive control technique, no prior knowledge of the uncertainties or disturbances is required either. Two simulation examples are finally provided to illustrate the effectiveness of the proposed control scheme.
\end{abstract}

\begin{IEEEkeywords}
	distributed optimization, nonlinear systems, adaptive control, weight-unbalanced, directed networks.
\end{IEEEkeywords}

\IEEEpeerreviewmaketitle

\section{Introduction}

\IEEEPARstart{I}{n} the past decade, the distributed optimization problem (DOP) has experienced significant advance due to its wide applications in a broad range of areas, including power systems, resource allocation and sensor networks, see \cite{Yang2019survey,Molzahn2017survey,Ram2010distributed}. The typical DOP aims at driving all the agents in a distributed manner towards the optimal solution of a global cost function which is often defined to be the sum of all the local cost functions attached to individual agents. Many existing works on this topic primarily focus on discrete-time cases, see, for example, \cite{Nedic2009distributed,Nedic2014distributed,Xi2017dextra,Xi2016distributed,Xi2018linear} and references therein. More recently, much effort has been devoted to distributed continuous-time optimization problems \cite{Wang2010control,Gharesifard2013distributed,Kia2015distributed,Tran2018distributed,zhang2017distributed,Yi2018distributed,Xie2019global,Tang2018optimal,li2019distributed}. A plausible reason is that many practical systems operate in a continuous-time setting, such as unmanned vehicles and robots among others \cite{Yang2019survey}. 

It is worth noting that the conventional DOP in the aforementioned works can be reformulated as a distributed optimal output consensus (OOC) problem of multi-agent systems with single integrator agent dynamics. However, several engineering scenarios in practice could be formulated as the OOC problem of multi-agent systems with more general agent dynamics, such as the economic dispatch in power systems \cite{stegink2016Unifying}, rigid body attitude formation control \cite{song2017Relative} and source seeking in multi-robot systems \cite{zhang2011extremum}. Some works have been reported recently in solving the OOC problem of multi-agent systems with double integrators agent dynamics \cite{Tran2018distributed,zhang2017distributed,Yi2018distributed} and high-order linear agent dynamics \cite{Xie2019global,Tang2018optimal,li2019distributed} over undirected graphs. 

More recently, several works on distributed optimization of nonlinear multi-agent systems are reported \cite{Wang2015distributed,liu2019distributed,Tang2018distributed,li2018consensus,li2021distributed,li2020distributed,tang2020optimal}. The authors in \cite{Wang2015distributed} propose two distributed controllers to solve the OOC problem for output feedback nonlinear systems over undirected graphs, while the authors in \cite{liu2019distributed} address the same problem for the same nonlinear systems but over balanced directed graphs. It should be pointed out that the nonlinear dynamics considered in \cite{Wang2015distributed,liu2019distributed} contain external disturbances, and an internal model is designed to address this challenge. Later, based on a two-layer control strategy, the authors in \cite{Tang2018distributed} develop an adaptive controller to deal with the difficulty resulting from unknown nonlinear dynamics. The developed controller consists of an optimal coordinator that generates the optimal solution and a reference-tracking controller that ensures each agent follows its private optimal coordinator. However, the proposed controller can only be applied when the unknown nonlinear dynamics are linearly parameterized, and thus greatly limits the scope of its application. The authors in \cite{li2018consensus} and \cite{li2021distributed} investigate the distributed optimization problem and resource allocation problem of multi-agent systems with second-order nonlinear dynamics respectively, where the nonlinear functions are unfortunately required to be globally Lipschitz. The distributed optimization problem of multi-agent systems with more general nonlinear dynamics in normal form are considered in \cite{tang2020optimal,li2020distributed}.

It is noted that the network topologies in the above-mentioned works are limited to undirected graphs or balanced digraphs. However, since the information exchange between agents may be unidirectional due to limited bandwidth, it is thus of both theoretical and practical significance to consider weight-unbalanced directed networks. In the discrete-time case, some push-sum and push-pull based strategies are proposed in \cite{Nedic2014distributed,Xi2016distributed,Xi2017dextra,Xi2018linear} to tackle general directed graphs by exploiting a row or column stochastic matrix. Inspired by the graph balancing technique in \cite{ren2005consensus,mei2015distributed}, a distributed continuous-time control strategy is proposed in \cite{Li2017distributed} by utilizing the left eigenvector corresponding to the zero eigenvalue of the Laplacian matrix to tackle weight-unbalanced directed networks. However, the technique cannot be adopted when the left eigenvector is not known \textit{a priori}. Furthermore, to remove the explicit dependency on the left eigenvector corresponding to the zero eigenvalue of the Laplacian matrix, the authors in \cite{Zhu2018continuous} propose a novel distributed algorithm with its gradient term being divided by an auxiliary variable. One limitation in the above works is that only single integrator agent dynamics are considered.

This paper considers the OOC problem of multi-agent systems with second-order nonlinear agent dynamics in the presence of uncertainties and disturbances over weight-unbalanced directed networks. Generally, main challenges in solving the problem arise from uncertain nonlinear dynamics/disturbances and unbalanced directed graphs. To address these challenges, we first convert the OOC problem into a reference-tracking problem by designing a distributed coordinator and then stabilize the obtained augmented system with a state feedback controller. The contributions of this paper in comparison to those existing relevant works are summarized as follows.

1) In contrary to undirected or balanced directed graphs considered in \cite{Wang2010control,Gharesifard2013distributed,Kia2015distributed,tang2020optimal,li2020distributed,li2021distributed}, this work concentrates on the more general and also more challenging weight-unbalanced directed networks. It is shown that the proposed controller is capable of tackling the imbalance arising from general directed networks, and thus is expected to be more widely applicable. 

2) Compared to integrator-type or linear agent dynamics considered in \cite{Gharesifard2013distributed,Wang2010control,Kia2015distributed,Zhu2018continuous,Xie2020suboptimal,Li2017distributed}, nonlinear agent dynamics with uncertainties and external disturbances are studied in this work. The adaptive control technique is exploited in controller design for practical cases where no prior knowledge of the uncertainties is available. In addition, an internal model is designed for each agent to deal with the external disturbance, which is not considered in \cite{tang2020optimal,li2020distributed,li2021distributed}. 
	 
3) Different from the existing works \cite{li2021distributed,li2018consensus,li2020distributed} where nonlinear functions are required to be globally Lipschitz, this work does not suffer from such a restriction. It is thus expected that the distributed controller developed in this paper can be applied more widely in practice.\\[-2mm]
 
The rest of this paper is organized as follows. Some necessary preliminaries are firstly reviewed in Section \ref{section preliminaries}. The problem formulation and main results of this paper are then given in Section \ref{section problem formulation} and Section \ref{section main results}, respectively. Two simulation examples are provided in Section \ref{section simulation results} to illustrate the effectiveness of the proposed controller, and the conclusion and future challenges are finally given in Section \ref{section conclusion}.

\textit{Notations}: $\mathbb{R}$, $\mathbb{R}^{n}$ and $ \mathbb{R}^{N \times N} $ refer to the sets of real numbers, $ n $-dimensional real vectors and $ N $-dimensional real square matrices, respectively. $\mathbf{0}_{N}$ and $\mathbf{1}_{N}$ are used to describe the $ N $-dimensional column vector with all entries equal to $ 0 $ and $ 1 $, respectively. $I_{n}$ represents the identity matrix of dimension $n\times n$. $\|\cdot\|$ denotes the Euclidean norm of vectors or induced 2-norm of matrices. $ A_{i} $ and $ A_{i}^{j} $ represent the $ i $-th row elements and the $ (i,j) $ entry of matrix $ A $, respectively. For matrices $ A $ and $ B $, their Kronecker product is denoted as $ A\otimes B $. $x^{\mathrm{T}}$ and $A^{\mathrm{T}}$ refer to the transpose of vector $ x $ and matrix $ A $, respectively. $ \operatorname{col}\left(x_{1}, x_{2}, \ldots, x_{n}\right) $ represents a column vector with $x_{1}, x_{2}, \ldots, x_{n}$ being its elements. $\operatorname{diag}\left(x_{1}, x_{2}, \ldots, x_{n}\right)$ represents a diagonal matrix with $x_{1}, x_{2}, \ldots, x_{n}$ being its diagonal elements. For a differentiable function $f: \mathbb{R}^{n} \rightarrow \mathbb{R}$, $\nabla f$ denotes its gradient. A continuous function $\alpha:[0, a) \rightarrow[0, \infty)$ is said to belong to class $\mathcal{K}$ if it is strictly increasing and $\alpha(0)=0 .$ It is said to belong to class $\mathcal{K}_{\infty}$ if it belongs to class $\mathcal{K}$ and $\lim_{r\to\infty} \alpha(r) = \infty$.

\section{Preliminaries}\label{section preliminaries}
In this section, we present some preliminaries on graph theory, convex analysis, and perturbed system theory.

\subsection{Graph Theory}
A graph is used to represent the information flow between agents. A weighted directed graph (in short, a digraph) of order $ N $ can be described by a triplet $\mathcal{G}=(\mathcal{V}, \mathcal{E}, \mathcal{A})$, which consists of a set $\mathcal{V}=\{1, \ldots, N\}$ of nodes, a collection $\mathcal{E} \subseteq \mathcal{V} \times \mathcal{V}$ of ordered pairs of nodes, called edges, and a weighted adjacency matrix $\mathcal{A}$. 
For $ i,j\in\mathcal{V} $, the ordered pair $(j, i) \in \mathcal{E}$ denotes an edge from $ j $ to $ i $, which means that the $i$-th agent can receive information from the $j$-th agent, but not vice versa. In this case, node $ j $ is called an in-neighbor of node $ i $, and node $ i $ is called an out-neighbor of node $ j $. The in-degree $ d_{\textrm{in}}(i) $ and out-degree $ d_{\textrm{out}}(i) $ of agent $ i $ are the numbers of its in-neighbors and out-neighbors, respectively.
In a digraph, a directed path is an ordered sequence of nodes in which any pair of consecutive nodes is a directed edge. A self-loop is an edge from a node to itself. Consistent with a common convention, it is assumed that there is no self-loop in a digraph. 
A digraph is said to be strongly connected if for any node, there exists a directed path from any other node to itself. 
The weighted adjacency matrix is denoted as $\mathcal{A}=\left[a_{i j}\right] \in \mathbb{R}^{N \times N}$, where $a_{i j}>0$ if $(j, i) \in \mathcal{E},$ otherwise $a_{i j}=0 .$ Besides, $a_{i i}=0$ for all $ i $ since there is no self-loop. Moreover, the Laplacian matrix $\mathcal{L}=\left[l_{i j}\right] \in \mathbb{R}^{N \times N}$ associated with the digraph $\mathcal{G}$ is defined as $l_{i i}=\sum_{j=1}^{N} a_{i j}$ and $l_{i j}=-a_{i j}$ for $i \neq j $. A digraph $\mathcal{G}$ is weight balanced if and only if $\mathbf{1}_{N}^{\mathrm{T}} \mathcal{L}=\mathbf{0}_{N}^{\mathrm{T}}$. One may refer to \cite{Bullo2019lectures} for more details on graph theory.

\begin{lemma}\cite{Bullo2019lectures,mei2015distributed} \label{graph theory lemma}
	Let $ \mathcal{L} $ be the Laplacian matrix associated with a strongly connected directed graph $ \mathcal{G} $. Then the following statements hold.
	\begin{itemize}
		\item[\romannumeral1)] \label{graph theory lemma_1} There exists a positive left eigenvector $ \varrho=\left(\varrho_{1},\varrho_{2},\ldots,\varrho_{N} \right)^{\mathrm{T}}  $ associated with the zero eigenvalue of the Laplacian matrix such that $ \varrho^{\mathrm{T}}\mathcal{L}=\mathbf{0}_{N}^{\mathrm{T}} $ and $ \sum_{i=1}^{N}\varrho_{i}=1 $.
		\item[\romannumeral2)] \label{graph theory lemma_2} Let $ R=\mathrm{diag}\left(\varrho_{1},\varrho_{2},\ldots,\varrho_{N} \right) $ and $\bar{\mathcal{L}}=\big( R\mathcal{L}+\mathcal{L}^{\mathrm{T}}R\big)/2 $. Then $ \bar{\mathcal{L}} $ is positive semidefinite, and its eigenvalues can be ordered as $ 0=\lambda_{1}<\lambda_{2}\leq \lambda_{3}\leq\ldots \leq\lambda_{N} $. 
		\item[\romannumeral3)] \label{graph theory lemma_3} $ \exp (-\mathcal{L} t) $ is a nonnegative matrix with positive diagonal entries for all $ t>0 $, and $ \lim _{t \rightarrow \infty} \exp (-\mathcal{L} t)=\mathbf{1}_{N} \varrho^{\mathrm{T}} $.
	\end{itemize}
\end{lemma}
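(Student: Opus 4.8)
The plan is to prove the three statements in order, with strong connectivity (equivalently, irreducibility of $\mathcal{L}$) supplying the positivity in each case through the Perron--Frobenius theorem. For part \romannumeral1), I would first note that $\mathcal{L}\mathbf{1}_N=\mathbf{0}_N$ by the definition of the Laplacian, so $0$ is an eigenvalue with right eigenvector $\mathbf{1}_N$. To exhibit a \emph{positive} left null vector, choose $c\geq\max_i l_{ii}$ and form $B=cI_N-\mathcal{L}$, which is nonnegative (its off-diagonal entries are $a_{ij}\geq0$) and, by strong connectivity, irreducible. Since $\mathbf{1}_N>0$ is a right eigenvector of $B$ with eigenvalue $c$, Perron--Frobenius forces $c$ to be the simple Perron root and furnishes a strictly positive left eigenvector $\varrho$ with $\varrho^{\mathrm{T}}B=c\varrho^{\mathrm{T}}$, i.e. $\varrho^{\mathrm{T}}\mathcal{L}=\mathbf{0}_N^{\mathrm{T}}$; rescaling yields $\sum_i\varrho_i=1$.

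For part \romannumeral2), the key observation is that $R\mathcal{L}$ is itself the Laplacian of a weight-balanced digraph: its entries are $\varrho_i l_{ij}$, and one checks $R\mathcal{L}\mathbf{1}_N=R(\mathcal{L}\mathbf{1}_N)=\mathbf{0}_N$ together with $\mathbf{1}_N^{\mathrm{T}}R\mathcal{L}=\varrho^{\mathrm{T}}\mathcal{L}=\mathbf{0}_N^{\mathrm{T}}$, so $R\mathcal{L}$ has zero row and column sums. Consequently $\bar{\mathcal{L}}=(R\mathcal{L}+\mathcal{L}^{\mathrm{T}}R)/2$ is symmetric, satisfies $\bar{\mathcal{L}}\mathbf{1}_N=\mathbf{0}_N$, and has nonpositive off-diagonal entries $-\tfrac12(\varrho_i a_{ij}+\varrho_j a_{ji})$; hence it coincides with the Laplacian of the \emph{undirected} graph whose symmetric edge weights are $w_{ij}=\tfrac12(\varrho_i a_{ij}+\varrho_j a_{ji})\geq0$. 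The standard identity $x^{\mathrm{T}}\bar{\mathcal{L}}x=\tfrac12\sum_{i,j}w_{ij}(x_i-x_j)^2\geq0$ then gives positive semidefiniteness, while strong connectivity makes this undirected graph connected, so $0$ is a simple eigenvalue and the spectrum orders as $0=\lambda_1<\lambda_2\leq\cdots\leq\lambda_N$.

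For part \romannumeral3), nonnegativity and positivity of the diagonal follow from writing $\exp(-\mathcal{L}t)=e^{-ct}\exp((cI_N-\mathcal{L})t)$ with $cI_N-\mathcal{L}\geq0$: the series $\exp((cI_N-\mathcal{L})t)=\sum_{k\geq0}\tfrac{t^k}{k!}(cI_N-\mathcal{L})^k$ is termwise nonnegative and dominates its $k=0$ term $I_N$, so each diagonal entry is at least $e^{-ct}>0$ for $t>0$. For the limit, I would invoke that $0$ is a simple eigenvalue of $\mathcal{L}$ (from the irreducibility argument above) with right eigenvector $\mathbf{1}_N$ and left eigenvector $\varrho^{\mathrm{T}}$, while every other eigenvalue has strictly positive real part by Gershgorin's theorem combined with the fact that $0$ is the only eigenvalue on the imaginary axis. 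The nonzero modes of $-\mathcal{L}$ then decay, and $\exp(-\mathcal{L}t)$ converges to the spectral projection onto the zero eigenspace, namely $\mathbf{1}_N\varrho^{\mathrm{T}}/(\varrho^{\mathrm{T}}\mathbf{1}_N)=\mathbf{1}_N\varrho^{\mathrm{T}}$.

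I expect the main obstacle to be the repeated conversion of strong connectivity into strict positivity via irreducibility and Perron--Frobenius: establishing that $\varrho$ is strictly positive in \romannumeral1), that $0$ is a simple eigenvalue in \romannumeral2)--\romannumeral3), and that all remaining eigenvalues of $\mathcal{L}$ lie in the open right half-plane so that only the $\mathbf{1}_N\varrho^{\mathrm{T}}$ mode survives the limit. The structural identities (especially that $R\mathcal{L}$ is a balanced Laplacian, reducing \romannumeral2) to the undirected case) are the substantive steps, whereas the remaining manipulations are routine.
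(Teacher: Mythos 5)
Your argument is correct, but note that the paper does not prove this lemma at all: it is stated as a known result and attributed to the cited references on graph theory, so there is no in-paper proof to compare against. Your write-up is essentially the standard self-contained derivation one finds in those sources. Part i) correctly converts strong connectivity into irreducibility of $B=cI_N-\mathcal{L}$ and uses the Perron--Frobenius fact that only the Perron root of an irreducible nonnegative matrix admits a positive eigenvector (applied to the right eigenvector $\mathbf{1}_N$) to identify $c$ as the simple spectral radius and extract a strictly positive left eigenvector. Part ii) hinges on the right structural observation, namely that $R\mathcal{L}$ has zero row and column sums, so $\bar{\mathcal{L}}$ is the Laplacian of a connected undirected graph with weights $\tfrac12(\varrho_i a_{ij}+\varrho_j a_{ji})$; the quadratic-form identity and simplicity of the zero eigenvalue then follow from the undirected theory. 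Part iii) is also sound: entrywise nonnegativity and the lower bound $e^{-ct}$ on the diagonal come from the termwise-nonnegative series for $\exp(Bt)$, Gershgorin places every eigenvalue of $\mathcal{L}$ in closed discs touching the imaginary axis only at the origin so that all nonzero eigenvalues have strictly positive real part, and simplicity of the zero eigenvalue (inherited from the simplicity of the Perron root of $B$) identifies the limit with the rank-one spectral projection $\mathbf{1}_N\varrho^{\mathrm{T}}/(\varrho^{\mathrm{T}}\mathbf{1}_N)=\mathbf{1}_N\varrho^{\mathrm{T}}$. No gaps; the only steps worth spelling out in a fully formal version are the two Perron--Frobenius invocations (positivity of $\varrho$ and simplicity of the Perron root), which you have correctly flagged as the load-bearing ingredients.
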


\subsection{Convex Analysis}
In this subsection, the definitions of Lipschitz continuity and strong convexity are recalled, please see \cite{Bertsekas2009convex} for more details.

A differentiable function $c: \mathbb{R}^{n} \rightarrow \mathbb{R}$ is said to be $\varpi $-strongly convex on $\mathbb{R}^{n}$ if there exists a constant $ \varpi>0 $ such that $(x-y)^{\operatorname{T}}(\nabla c(x)-\nabla c(y)) \geq \varpi\|x-y\|^{2}$ for all $x, y \in \mathbb{R}^{n} $. A function $g: \mathbb{R}^{n} \rightarrow \mathbb{R}^{n}$ is said to be globally Lipschitz on $\mathbb{R}^{n}$ if there exists a constant $ \iota>0 $ such that $\|g(x)-g(y)\| \leq \iota\|x-y\|$ for all $x, y \in \mathbb{R}^{n}$.

\subsection{Perturbed System Theory}
Last but not least, a theory of perturbed system which facilitates subsequent analysis is recalled in this subsection.

\begin{lemma} \label{lemma 4}
	Consider the following perturbed system
	\begin{equation} \label{lemma_perturbed system}
		\dot{x}=f(t,x)+g(t,x).
	\end{equation}
	Let $x=0$ be an exponentially stable equilibrium point of the nominal system $ \dot{x}=f(t,x) $, where $ f $ is continuously differentiable and its Jacobian matrix $ [\partial f/ \partial x] $ is bounded on $ \mathbb{R}^{n} $. Suppose the perturbation term $g(t, x)$ satisfies $ g(t,0)=0 $ and $ \|g(t, x)\| \leq \gamma(t)\|x\| $, where $\lim_{t\to\infty}\gamma(t)= 0$. Then, the origin is an exponentially stable equilibrium point of the perturbed system (\ref{lemma_perturbed system}). 
	
	\begin{proof}
		It can be proved by a simple combination of Corollary 9.1 and Lemma 9.5 in \cite{khalil2002nonlinear}.
	\end{proof}
\end{lemma}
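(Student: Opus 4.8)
The plan is to reduce the claim to a standard Lyapunov argument built on a converse theorem. Since $x=0$ is exponentially stable for the nominal system $\dot{x}=f(t,x)$ and the Jacobian $[\partial f/\partial x]$ is bounded on $\mathbb{R}^{n}$, a converse Lyapunov theorem for exponential stability (see \cite{khalil2002nonlinear}) guarantees a continuously differentiable function $V(t,x)$ and positive constants $c_{1},c_{2},c_{3},c_{4}$ such that $c_{1}\|x\|^{2}\le V(t,x)\le c_{2}\|x\|^{2}$, the derivative along the nominal flow obeys $\partial V/\partial t+(\partial V/\partial x)f(t,x)\le -c_{3}\|x\|^{2}$, and $\|\partial V/\partial x\|\le c_{4}\|x\|$. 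First I would adopt this same $V$ as a Lyapunov candidate for the perturbed system, which is legitimate because the hypotheses on $f$ are exactly those required to invoke the converse theorem.

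Next I would differentiate $V$ along the trajectories of (\ref{lemma_perturbed system}). Splitting off the nominal part and bounding the cross term via $\|\partial V/\partial x\|\le c_{4}\|x\|$ together with the perturbation estimate $\|g(t,x)\|\le\gamma(t)\|x\|$ gives
\[
\dot{V}\le -c_{3}\|x\|^{2}+c_{4}\gamma(t)\|x\|^{2}=-\big(c_{3}-c_{4}\gamma(t)\big)\|x\|^{2}.
\]
The decisive point is that $\gamma(t)\to 0$: there exists a time $T$ beyond which $c_{4}\gamma(t)\le c_{3}/2$, so that for $t\ge T$ one has $\dot{V}\le -(c_{3}/2)\|x\|^{2}\le -(c_{3}/2c_{2})V$. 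The comparison lemma then yields exponential decay of $V$, and hence of $\|x\|$, on $[T,\infty)$.

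Finally I would patch the transient interval to obtain a genuinely uniform (in $t_{0}$) exponential bound. On $[t_{0},T]$ the sign of $c_{3}-c_{4}\gamma(t)$ is uncontrolled, but $\gamma$ is bounded there, so the crude estimate $\dot{V}\le(c_{4}\gamma(t)/c_{1})V$ gives $V(T)\le V(t_{0})e^{C}$ with $C$ depending only on $T$ and $\sup_{[0,T]}\gamma$. Chaining this with the decay estimate for $t\ge T$, absorbing the finite factors into a single constant, and invoking the quadratic sandwich $c_{1}\|x\|^{2}\le V\le c_{2}\|x\|^{2}$ produces $\|x(t)\|\le k\,\|x(t_{0})\|\,e^{-\lambda(t-t_{0})}$ for suitable $k,\lambda>0$, uniformly in $t_{0}\ge 0$. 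I expect this patching step to be the main obstacle: because $\gamma(t)$ is only assumed to vanish asymptotically and may be large initially, the negative definiteness of $\dot{V}$ holds only eventually, so one must argue carefully that the constants in the final exponential estimate remain independent of the initial time.
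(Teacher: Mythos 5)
Your proposal is correct and is essentially the same argument the paper relies on: the paper's proof is a one-line citation to Corollary 9.1 and Lemma 9.5 of \cite{khalil2002nonlinear}, and what you have written out --- the converse Lyapunov function with the quadratic sandwich $c_1\|x\|^2\le V\le c_2\|x\|^2$ and gradient bound, the estimate $\dot V\le-(c_3-c_4\gamma(t))\|x\|^2$, eventual negativity once $c_4\gamma(t)\le c_3/2$, and the uniform patching of the transient interval --- is precisely the standard proof behind those cited results. The only (harmless) implicit assumption you add is local boundedness of $\gamma$ on $[0,T]$, which is needed for the perturbation bound to be meaningful and is taken for granted in the cited source as well.
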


\section{Problem Formulation} \label{section problem formulation}

Consider a heterogeneous multi-agent system composed of $ N $ agents over a weight-unbalanced directed network. The dynamics of the agents are described by the following second-order uncertain nonlinear systems,
\begin{align} \label{dynamics}
	\dot{x}_{i 1} & =x_{i 2}, \notag\\
	\dot{x}_{i 2} & =f_{i}\left(x_{i 1}, x_{i 2}, v, w\right)+b_{i}(w) u_{i},\\
	y_{i} & =x_{i 1}, \quad i=1,2, \ldots, N, \notag
\end{align}
where $ x_{i}=\operatorname{col}(x_{i1},x_{i2}) \in\mathbb{R}^{2}$ is the state of the $ i$-th agent, $u_{i} \in \mathbb{R}$ and $ y_{i} \in \mathbb{R}$ are its control input and measurement output, respectively. $ w \in\mathbb{W} $ represents an uncertain parameter vector, with $ \mathbb{W} \subset \mathbb{R}^{n_{w}}$ being an unknown compact set. $v \in \mathbb{R}^{n_{v}}$ is an exogenous signal representing the disturbance generated by the following uncertain linear exosystem,
\begin{align} \label{exosystem}
\dot{v}=S(\sigma) v,
\end{align}
where $ \sigma\in\mathbb{R}^{n_{\sigma}} $ represents an uncertain constant vector  belonging to an unknown compact set $ \mathbb{S} $. For $ i=1,2, \ldots, N $, $f_{i}$ and $ b_{i} $ are assumed to be sufficiently smooth functions satisfying $f_{i}(0,0,0, w)=0$ and $ b_{i}(w)>0 $ for all $ w\in\mathbb{W} $.

\begin{remark}
	Compared with existing works, the nonlinear dynamics considered in this work are more general in at least two aspects. On one hand, unlike \cite{li2021distributed,li2018consensus} where the nonlinear functions $ f_{i} $'s are required to satisfy the globally Lipschitz condition, this work does not suffer from such a restriction. On the other hand, the nonlinear dynamics considered in this work contain external disturbances generated by an exosystem (\ref{exosystem}), which cannot be handled in \cite{tang2020optimal,li2018consensus,li2021distributed}. It is also worth pointing out that the exosystem (\ref{exosystem}) can produce a large class of external signals, such as sinusoidal, step and ramp type signals \cite{huang2004nonlinear}.
\end{remark}

In addition, it is assumed that each agent $ i $ possesses a local cost function $ c_{i}(s): \mathbb{R}\to \mathbb{R} $ with its local decision variable $ s\in \mathbb{R} $. It should be emphasized that the local cost function $ c_{i}(\cdot) $ is only avaliable to agent $ i $. Define the global cost function and its corresponding optimal solution as $ c(s)=\sum_{i=1}^{N}c_{i}(s) $ and $ s^{\star}\in \mathbb{R} $, respectively. 
To seek the global minimizer in a distributed manner, the controller design for each agent is only allowed to make use of information from its in-neighbors and itself. More specifically, the controller is expected to take the following form,
\begin{equation} \label{controller form_origin system}
	\begin{aligned}
	u_{i} = & \kappa_{i1}\big(\nabla f_{i}, x_{i}, \upsilon_{j}, j\in \bar{\mathcal{N}}_{i}\big), \\
	\dot{\upsilon}_{i} = & \kappa_{i2}\big(\nabla f_{i}, x_{i}, \upsilon_{j}, j\in \bar{\mathcal{N}}_{i}\big),
	\end{aligned}
\end{equation}
where $ \kappa_{i1} $ and $ \kappa_{i2} $ are sufficiently smooth functions vanishing at the origin, $ \bar{\mathcal{N}}_{i} = \mathcal{N}_{i} \cup \{i\} $ is a set containing the in-neighbors and itself of agent $ i $, $ \upsilon_{i}\in \mathbb{R}^{n_{i\upsilon}} $ is a state of the dynamic controller with its dimention $ n_{i\upsilon} $ to be specificed later.\\[-1mm]

The objective of this work is to develop a distributed state feedback controller such that the following defined distributed optimal output consensus problem for second-order uncertain nonlinear multi-agent systems over weight-unbalanced directed networks can be solved.
\begin{problem} \label{problem}
	Consider the multi-agent system (\ref{dynamics}) and the exosystem (\ref{exosystem}) under the directed graph $ \mathcal{G} $ with local cost functions $ c_{i}(\cdot) $'s, and nonempty compact sets $ \mathbb{W}\subseteq \mathbb{R}^{n_{w}} $ and $ \mathbb{S}\subseteq \mathbb{R}^{n_{\sigma}} $, which are not necessarily known. Design a distributed dynamic state feedback controller of the form (\ref{controller form_origin system}) such that, for any constant vector $ \operatorname{col}(w,\sigma)\in \mathbb{W}\times \mathbb{S} $, the trajectories of the closed-loop system composed of the dynamics (\ref{dynamics}) and the distributed controller (\ref{controller form_origin system}) starting form any initial state $ x_{i}(0)=x_{0} $ and $ v(0)=v_{0} $ exists and is bounded for all $ t\geq 0 $, moreover, the outputs $ y_{i}, i=1,2,\ldots,N $ converge to the optimal value $ s^{\star} $ which minimizes the global cost function $ c(s) = \sum_{i=1}^{n}c_{i}(s) $ as time goes to infinity.
\end{problem}

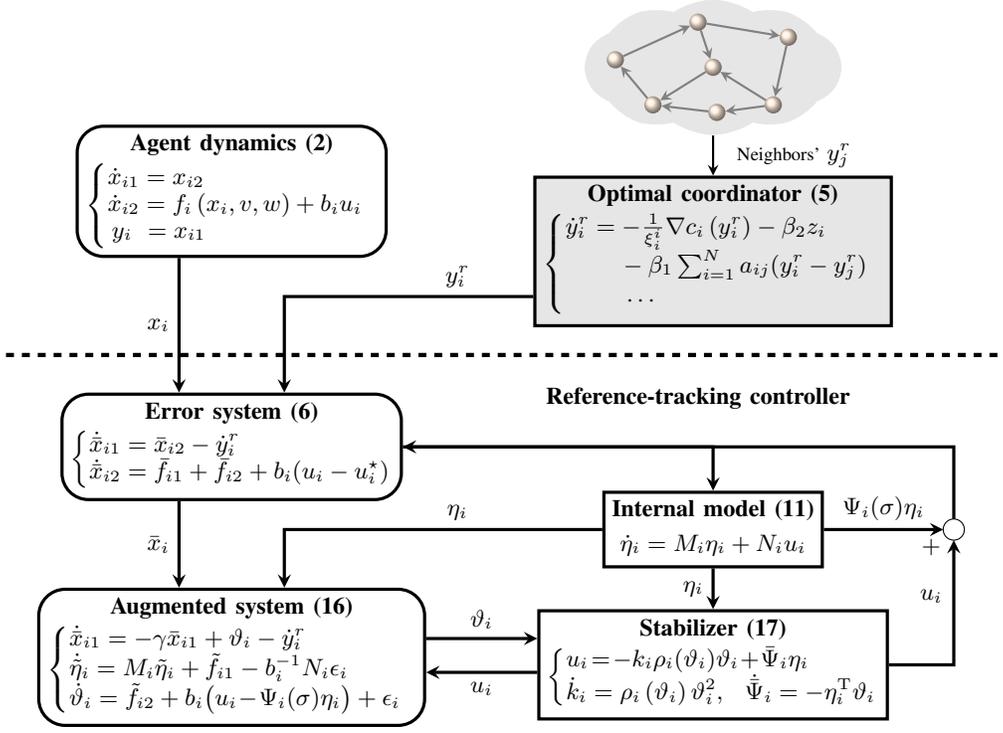
\begin{figure*}[!t]
	\centering
	\small
	\begin{tikzpicture}[scale=1, auto, every edge/.append style = { -, thick, draw=gray}]
		
		\tikzset{VertexStyle/.style = {shape = circle, ball color = orange!15, text = black, inner sep = 2.3pt, outer sep = 0pt, minimum size = 5 pt}}
		
		\tikzstyle{arrow} = [thick,->,>=stealth]
		
		\tikzstyle{block} = [draw, rectangle, very thick, minimum height=2em, minimum width=2em, align = center, font = \bfseries]
		\tikzstyle{blockdash} = [draw, dash, rectangle, ultra thick, minimum height=2em, minimum width=2em, align = center, font = \bfseries]
		\tikzstyle{corners} = [draw, rectangle, rounded corners=10pt, very thick, minimum height=2em, minimum width=2em, align = center, font = \bfseries]
		
		\tikzstyle{sum} = [draw,  circle, minimum size = 8pt, inner sep = 0pt, outer sep = 0pt]
		\node (Dynamics) [corners,align=center] {\textbf{Agent dynamics (\ref{dynamics})}\\[1mm] $
			\left\{\begin{array}{l}
				\!\!\!\dot{x}_{i 1} =x_{i 2} \\
				\!\!\!\dot{x}_{i 2} =f_{i}\left(x_{i}, v, w\right)+b_{i} u_{i} \\
				\!\!y_{i} ~=x_{i 1}
			\end{array}\right.
			$};
		
		\node (Error) [corners,align=center, below of=Dynamics, node distance=3.4cm] {\textbf{Error system (\ref{error_system})}\\[1mm] $
			\left\{\begin{array}{l}
				\!\!\!\dot{\bar{x}}_{i 1}=\bar{x}_{i 2}-\dot{y}_{i}^{r} \\
				\!\!\!\dot{\bar{x}}_{i 2}=\bar{f}_{i1} +\bar{f}_{i 2} +b_{i} (u_{i}-u_{i}^{\star})
			\end{array}\right.\!\!\!
			\vspace{0.05cm}
			$};
		
		\node (Augmented) [corners,align=center, below of=Error, node distance=2.8cm] {\textbf{Augmented system (\ref{augmented system 2})}\\[1mm] $
			\left\{\begin{array}{l}
				\!\!\!\dot{\bar{x}}_{i 1}=-\gamma \bar{x}_{i 1}+\vartheta_{i}-\dot{y}_{i}^{r} \\
				\!\!\!\dot{\tilde{\eta}}_{i}=M_{i} \tilde{\eta}_{i}+\tilde{f}_{i 1}-b_{i}^{-1} N_{i}\epsilon_{i} \\
				\!\!\!\dot{\vartheta}_{i}=\tilde{f}_{i 2} +b_{i} \big(u_{i}\!-\! \Psi_{i}(\sigma)\eta_{i} \big) +\epsilon_{i}
			\end{array}\right.
			\vspace{0.05cm}
			$};
		
		\node (Coordinator) [block,align=center, right of=Dynamics, node distance=6.4cm, yshift=-0.8cm, fill=black!10] {\textbf{Optimal coordinator (\ref{algorithm_r})}\\[1mm] $
			\left\{\begin{array}{l}
				\!\!\!\dot{y}_{i}^{r} =-\frac{1}{\xi_{i}^{i}} \nabla c_{i}\left(y_{i}^{r}\right)-\beta_{2} z_{i} \\ 
				~~~~~\!-\beta_{1} \sum_{i=1}^{N} a_{i j}(y_{i}^{r}-y_{j}^{r}) \\[1mm]
				~~~~~\cdots
			\end{array}\right.
			\vspace{0.05cm}
			$};
		
		\node (IM) [block,align=center, below of=Coordinator, node distance=3.7cm] {\textbf{Internal model (\ref{internal_model})}\\[1mm] $
			\dot{\eta}_{i}=M_{i} \eta_{i}+N_{i} u_{i}
			\vspace{0.05cm}
			$};
		
		\node (Stabilizer) [block,align=center, below of=IM, node distance=1.8cm] {\textbf{Stabilizer (\ref{controller_augmented_1})}\\[1mm] $
			\left\{\begin{array}{l}
				\!\!\!u_{i} \!=\!-k_{i} \rho_{i}\!\left(\vartheta_{i}\right)\! \vartheta_{i} \!+\! \bar{\Psi}_{i} \eta_{i} \\
				\!\!\!\dot{k}_{i} =\rho_{i}\left(\vartheta_{i}\right) \vartheta_{i}^{2},~~
				\dot{\bar{\Psi}}_{i} =-\eta_{i}^{\operatorname{T}}\vartheta_{i}
			\end{array}\right.\!\!\!
			\vspace{0.05cm}
			$};
		
		\node [sum, right of=IM, node distance=3.2cm] (sum) {};

		\node(cloud)[cloud, cloud puffs=10, cloud puff arc= 70,
		minimum width=3.4cm, minimum height=1.8cm, aspect=1, above of= Coordinator, xshift=-0cm, node distance=2.45cm, fill=black!10]  {};

		\path [draw, arrow, thick]  (cloud.south) -- node [right, align=center,xshift=0.2cm,] {{\scriptsize Neighbors' }${y}_{j}^{r}$} (Coordinator.north -| cloud.south);
		
		\foreach \place/\x in {{(-2.1,0.3)/1}, {(-1.6,-0.3)/2},{(-1,0.8)/3},
			{(-0.75,-0.4)/4}, {(-0.8,0.2)/5}, {(0.2,0.6)/6}, {(0,-0.3)/7}}
		\node[VertexStyle, above of= Coordinator, xshift=7.2cm, node distance=1.45cm] (a\x) at \place {};
		
		\draw[arrow,black!50] (a2) -- (a1);
		\draw[arrow,black!50] (a1) -- (a3);
		\draw[arrow,black!50] (a7) -- (a5);
		\draw[arrow,black!50] (a3) -- (a6);
		\draw[arrow,black!50] (a4) -- (a2);
		\draw[arrow,black!50] (a3) -- (a5);
		\draw[arrow,black!50] (a7) -- (a4);
		\draw[arrow,black!50] (a5) -- (a2);
		\draw[arrow,black!50] (a6) -- (a7);
		
		\path (Dynamics.south)+(-0.7,0) coordinate(ds);
		\path (Error.north)+(-0.7,0) coordinate(en1);
		\draw [arrow, -stealth, very thick](ds) -- node [left] {$x_{i}$} (en1);
		
		\path (Error.north)+(0.7,0) coordinate(en2);
		\path (Coordinator.west)+(0,-0.6) coordinate(cw);
		\draw [arrow, very thick](cw) -| node [above, xshift=2.3cm] {$y_{i}^{r}$} (en2);
		
		\path (Error.south)+(-0.7,0) coordinate(es);
		\path (Augmented.north)+(-0.7,0) coordinate(an1);
		\draw [arrow, very thick](es) -- node [left] {$\bar{x}_{i}$} (an1);
		
		\path (Augmented.north)+(0.7,0) coordinate(an2);
		\draw [arrow, very thick](IM.west) -| node [above, xshift=2.3cm] {$\eta_{i}$} (an2);
		
		\path (Augmented.east)+(0,0.25) coordinate(ae);
		\path [draw, arrow, very thick]  (ae) -- node [above, align=center] {$\vartheta_{i}$} (Stabilizer.west |- ae);
		
		\path (Stabilizer.west)+(0,-0.1) coordinate(sw);
		\path [draw, arrow, very thick]  (sw) -- node [below, align=center] {$u_{i}$} (Augmented.east |- sw);
		
		\draw [arrow, very thick](IM.south) -- node [left] {$\eta_{i}$} (Stabilizer.north);
		
		\draw [arrow, very thick](IM.east) -- node[below,pos=0.9] {$+$} node [above] {$\Psi_{i}(\sigma)\eta_{i}$} (sum);
		
		\draw [arrow, very thick](Stabilizer.east) -| node [above, xshift=-0.3cm, yshift=0.7cm] {$u_{i}$} (sum);
		
		\draw [arrow, very thick](sum) |- node[above, xshift=-0.2cm, yshift=0cm] {$$} (Error.east);
		
		\draw [arrow, very thick](Error.east) -| node[] {} (IM.north);
		
		\path (Error.north)+(-3,0.5) coordinate(en3);
		\path (Error.north)+(10.4,0.5) coordinate(en4);
		\draw [ultra thick, dashed](en3) -- node [below,xshift=2.5cm,yshift=-0.3cm] {\textbf{Reference-tracking controller}} (en4);
		
	\end{tikzpicture}
	\caption{Design diagram for distributed optimal output consensus over weight-unbalanced directed networks.}
	\label{Fig_Two_layer}
\end{figure*}

To solve Problem \ref{problem}, the following standard assumptions are needed.

\begin{assumption} \label{assumption_cost functions}
	The local cost function $ c_{i} $ is continuously differentiable and $ \varpi_{i} $-strongly convex, and $\nabla c_{i}$ is globally Lipschitz on $ \mathbb{R} $ with constant $ \iota_{i} $.
	\label{cost function assumption}
\end{assumption}

\begin{remark} \label{remark_cost function_2}
	The strong convexities of local cost functions in Assumption \ref{assumption_cost functions} guarantee the existence and uniqueness of the optimal solution $ s^{\star}\in \mathbb{R} $. Assumption \ref{assumption_cost functions} is standard for solving the distributed optimization problem, and thus commonly used in many existing works, for example, \cite{Kia2015distributed,Zhu2018continuous,tang2020optimal}.
\end{remark}

\begin{assumption} \label{graph assumption}
	The directed graph $ \mathcal{G} $ is strongly connected.
\end{assumption}

\begin{assumption} \label{assumption_exosysem}
	The exosystem is neutrally stable, i.e., all the eigenvalues of $ S(\sigma) $ are semi-simple with zero real parts for all $ \sigma\in\mathbb{S} $.
\end{assumption}

\begin{remark} \label{remark_d}
	Under Assumption \ref{assumption_exosysem}, it can be shown that given any initial condition $ v(0)\in \mathbb{R}^{n_{v}} $, the state $ v(t) $ of the exosystem (\ref{exosystem}) is bounded for all $ t\geq 0 $. Therefore, there exists a compact set $ \mathbb{D}\subseteq \mathbb{R}^{n_{v}}\times\mathbb{W} \times \mathbb{S} $ to which $ d=\operatorname{col}(v,w,\sigma) $ belongs for all $ t\geq 0 $. It is worth emphasizing that we only rely on the compactness of the set $ \mathbb{D} $ in the follow-up analysis, and do not need a prior knowledge of its exact bound.
\end{remark}

\section{Main Results}\label{section main results}

In this section, a distributed optimal output consensus strategy based on a two-layer structure is developed to solve the problem by addressing the difficulties resulting from weight-unbalanced directed graphs and uncertain nonlinear dynamics. The architecture of the two-layer strategy is depicted in Fig. \ref{Fig_Two_layer}. The upper layer is a distributed optimal coordinator for each agent, which cooperates with others to generate a local reference signal $ y_{i}^{r} $ that eventually converges to the optimal solution, while the lower layer is a reference-tracking controller for each agent to track its private reference signal $ y_{i}^{r} $. 

In what follows, the distributed optimal coordinator for each agent will be first developed. An associated error system is then defined such that the Problem \ref{problem} can be converted to a reference-tracking problem. Then by adopting an internal model to handle the external disturbance in agent dynamics, an adaptive stabilizer will be proposed to deal with the augmented system composed of the obtained error system and internal model.

\subsection{Distributed Optimal Coordinator Design}
In this subsection, we design the distributed optimal coordinator to generate the optimal solution for the concerned multi-agent system over the unbalanced directed network. Then by embedding the optimal coordinator in the feedback loop, the optimal output consensus problem under consideration is converted to a reference-tracking problem, which will be addressed in the next subsection. Specifically, inspired by \cite{Zhu2018continuous}, the optimal coordinator is designed for each agent $ i $ as follows,
\begin{subequations} \label{algorithm_r}
	\begin{align}
	\dot{y}_{i}^{r} & =-\frac{1}{\xi_{i}^{i}} \nabla c_{i}\left(y_{i}^{r}\right)-\beta_{1} \sum_{i=1}^{N} a_{i j}\left(y_{i}^{r}-y_{j}^{r}\right)-\beta_{2} z_{i},  \label{algorithm_r_a}\\
	\dot{z}_{i} & =\beta_{1} \sum_{i=1}^{N} a_{i j}\left(y_{i}^{r}-y_{j}^{r}\right), \quad z_{i}(0)=0,\\
	\dot{\xi}_{i} &= -\sum_{j=1}^{N}a_{ij}(\xi_{i}-\xi_{j}),
	\end{align}
\end{subequations}
where $ y_{i}^{r}\in \mathbb{R} $ represents the generated reference signal for agent $ i $, $ z_{i}\in \mathbb{R} $  and $ \xi_{i}\in\mathbb{R}^{N} $ are auxiliary variables, with $ \xi_{i}^{k} $ being its $ k $-th component and initial value $ \xi_{i}(0) $ satisfying
$ \xi_{i}^{i}(0) = 1 $, otherwise $ \xi_{i}^{k}(0) = 0 $ for all $ k \neq i $. $ \beta_{1} $ and $ \beta_{2} $ are two positive constants to be determined later. The optimal coordinator for each agent only requires information of its neighbors and itself, and thus is distributed.

Define $\xi=\operatorname{col}\left(\xi_{1},\xi_{2}, \ldots, \xi_{N}\right)$. Then the dynamics of $\xi$ can be written as $\dot{\xi}=-\left(\mathcal{L} \otimes I_{N}\right) \xi $. Recall that $\xi_{i}^{i}(0)=1$, and $\xi_{i}^{k}(0)=0$ for all $k \neq i$. It then follows from \romannumeral3) of Lemma \ref{graph theory lemma} that
$$
\begin{aligned}
	\xi_{i}^{i}(t) &=\left[\exp \left(-\left(\mathcal{L} \otimes I_{N}\right) t\right)\right]_{(i-1) N+i} \cdot \xi(0) \\
	&=\left[\exp \left(-\left(\mathcal{L} \otimes I_{N}\right) t\right)\right]_{(i-1) N+i}^{(i-1) N+i} \cdot \xi_{i}^{i}(0)>0,
\end{aligned}
$$
for all $t \geq 0$. Therefore, the algorithm (\ref{algorithm_r_a}) is well defined. Moreover, it can be deduced that $ \lim_{t\to\infty}\xi(t)=\lim_{t\to\infty}\exp\big(-(\mathcal{L}\otimes I_{N})t \big)\xi(0)=  \left( \mathbf{1}_{N}\varrho^{\mathrm{T}}\otimes I_{N}\right)\xi(0)=\mathbf{1}_{N}\otimes \varrho  $, which implies that $ \lim_{t\to\infty} \xi_{i}^{i}(t) = \varrho_{i} $ exponentially.

\begin{remark}
	The design of the optimal coordinator (\ref{algorithm_r}) takes the modified Lagrange structure as proposed in \cite{Kia2015distributed}, with the gradient term $ \nabla c_{i} $ being divided by $ \xi_{i}^{i} $ to address the imbalance resulting from general directed networks. A similar approach in \cite{Li2017distributed} is to divide the gradient term by the left eigenvector $ \varrho $ corresponding to the zero eigenvalue of the Laplacian matrix. However, it is worth noting that the left eigenvector $ \varrho $ is of global information, which may not be available in advance. Hence, the utilization of $ \xi_{i}^{i} $ provides an alternative so that the explicit dependence of the left eigenvector $ \varrho $ can be avoided.
\end{remark}

Now, we show that the generated signals $ y_{i}^{r} $, $ i=1,2,\ldots,N $ eventually tend to the optimal value $ s^{\star} $, summarized in the following result.

\begin{theorem} \label{proposition1} 
	Consider the distributed optimal coordinators (\ref{algorithm_r}) under Assumptions \ref{assumption_cost functions} and \ref{graph assumption}. Given $ z_{i}(0)=0 $ and any initial conditions $ y_{i}^{r}(0) $ for $ i=1,2,\ldots,N $, there exist sufficiently large positive constants  $ \beta_{1}$ and $ \beta_{2} $ such that the generated reference signals $ y_{i}^{r} $'s are bounded for all $ t\geq 0 $, and exponentially converge to the optimal solution $ s^{\star} $ of the global cost function.
\end{theorem}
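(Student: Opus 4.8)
The plan is to exploit the already-established fact that the decoupled auxiliary variables satisfy $\xi_i^i(t)\to\varrho_i$ exponentially, so that the coordinator is a vanishing perturbation of a constant-coefficient \emph{nominal} system obtained by replacing each $1/\xi_i^i(t)$ with $1/\varrho_i$. Stacking $y^r=\operatorname{col}(y_1^r,\dots,y_N^r)$ and $z=\operatorname{col}(z_1,\dots,z_N)$ and writing $R=\operatorname{diag}(\varrho_1,\dots,\varrho_N)$, the nominal system reads $\dot y^r=-R^{-1}\nabla c(y^r)-\beta_1\mathcal L y^r-\beta_2 z$, $\dot z=\beta_1\mathcal L y^r$, where $\nabla c(y^r)=\operatorname{col}(\nabla c_1(y_1^r),\dots,\nabla c_N(y_N^r))$. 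First I would record the conserved quantity $\varrho^{\mathrm T}z(t)=0$ for all $t$: since $z(0)=0$ and $\dot z\in\operatorname{range}(\mathcal L)$, integration gives $z(t)=\beta_1\mathcal L\int_0^t y^r\,ds$, and $\varrho^{\mathrm T}\mathcal L=\mathbf 0_N^{\mathrm T}$ from Lemma~\ref{graph theory lemma} yields the claim. This confines the dual variable to the invariant subspace $\{\varrho^{\mathrm T}z=0\}$, which is where exponential decay must be sought.

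Second, I would identify the unique equilibrium and connect it to optimality. At equilibrium $\dot z=0$ forces $\mathcal L y^r=0$, hence $y^r=\mathbf 1_N\bar y$ by strong connectivity, and $\dot y^r=0$ then gives $\beta_2 z^\star=-R^{-1}\nabla c(\mathbf 1_N\bar y)$. Left-multiplying by $\varrho^{\mathrm T}$ and using $\varrho^{\mathrm T}R^{-1}=\mathbf 1_N^{\mathrm T}$ together with $\varrho^{\mathrm T}z^\star=0$ produces $\sum_{i=1}^N\nabla c_i(\bar y)=0$, i.e.\ $\bar y=s^\star$ by Assumption~\ref{assumption_cost functions}. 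Thus the nominal equilibrium is exactly $(y^r,z)=(\mathbf 1_N s^\star,z^\star)$, so showing convergence of the coordinator to this point is the same as showing $y_i^r\to s^\star$.

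Third, and this is the technical core, I would prove that this equilibrium is exponentially stable. In the error coordinates $\tilde y=y^r-\mathbf 1_N s^\star$, $\tilde z=z-z^\star$ the nonlinearity $h(\tilde y)=\nabla c(y^r)-\nabla c(\mathbf 1_N s^\star)$ is globally sector bounded, $\varpi_{\min}\|\tilde y\|^2\le\tilde y^{\mathrm T}h(\tilde y)$ and $\|h(\tilde y)\|\le\iota_{\max}\|\tilde y\|$, by strong convexity and the Lipschitz gradient of Assumption~\ref{assumption_cost functions}. I would take a composite Lyapunov function $V=\tfrac12\tilde y^{\mathrm T}R\tilde y+\tfrac{\beta_2}{2}\tilde z^{\mathrm T}\tilde z+\varepsilon\,\tilde y^{\mathrm T}R\tilde z$ with $\varepsilon>0$ small enough to keep $V$ positive definite. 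Differentiating, the diagonal terms supply $-\tilde y^{\mathrm T}h(\tilde y)\le-\varpi_{\min}\|\tilde y\|^2$ and the graph term $-\beta_1\tilde y^{\mathrm T}\bar{\mathcal L}\tilde y\le0$ with $\bar{\mathcal L}=(R\mathcal L+\mathcal L^{\mathrm T}R)/2\succeq0$, while the cross term contributes the negative definiteness $-\varepsilon\beta_2\tilde z^{\mathrm T}R\tilde z$ in $\tilde z$ that the plain energy lacks; here the restriction $\varrho^{\mathrm T}\tilde z=0$ and the spectral gap $\lambda_2>0$ of $\bar{\mathcal L}$ from Lemma~\ref{graph theory lemma} are what make $\bar{\mathcal L}$ effective on the relevant subspace. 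I expect this step to be the main obstacle: the dynamics are of saddle-point type with a \emph{non-symmetric} Laplacian, so the plain energy gives no damping in $\tilde z$ and only LaSalle-type asymptotic conclusions, and the bilinear coupling $\beta_1\beta_2\tilde z^{\mathrm T}\mathcal L\tilde y$ must be dominated by a delicate choice of $\varepsilon$ relative to the large gains $\beta_1,\beta_2$. Should the composite energy prove intractable, an alternative is to verify directly that the linearization on the invariant subspace is Hurwitz and to lift this to the nonlinear system through the global sector bound on $h$ by an absolute-stability argument; either way the outcome is $\dot V\le-c\,\|\operatorname{col}(\tilde y,\tilde z)\|^2\le-c'V$ on $\{\varrho^{\mathrm T}\tilde z=0\}$.

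Finally, I would return to the actual time-varying coordinator. In the same error coordinates the difference from the nominal system is the perturbation $g_i(t,\tilde y)=\big(\tfrac1{\varrho_i}-\tfrac1{\xi_i^i(t)}\big)\nabla c_i(y_i^r)$, which splits into a state-dependent part bounded by $\gamma(t)\|\tilde y\|$ with $\gamma(t)\to0$ (using the Lipschitz gradient together with $\xi_i^i\to\varrho_i$ and $\xi_i^i\ge\delta>0$) and an exponentially decaying forcing term $\big(\tfrac1{\varrho_i}-\tfrac1{\xi_i^i(t)}\big)\nabla c_i(s^\star)$. The state-dependent part fits the hypotheses of Lemma~\ref{lemma 4}, noting that $[\partial f/\partial x]$ is bounded because $\nabla c_i$ is globally Lipschitz, while the exponentially decaying forcing is absorbed by evaluating $\dot V$ along the perturbed flow: for $t$ large the perturbation terms are dominated, giving $\dot V\le-\tfrac{c'}{2}V+(\text{exp.\ decaying})$, after which a comparison argument yields boundedness on $[0,\infty)$ and exponential convergence of $(\tilde y,\tilde z)$ to the origin. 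In particular $y_i^r(t)\to s^\star$ exponentially for every $i$, which is the assertion.
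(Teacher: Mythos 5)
Your overall architecture coincides with the paper's: the same decomposition into a nominal system with $R^{-1}$ in place of $\Xi^{-1}$ plus a perturbation, the same identification of the equilibrium with $\mathbf{1}_{N}\otimes s^{\star}$ via $\varrho^{\mathrm{T}}\mathcal{L}=\mathbf{0}_{N}^{\mathrm{T}}$ and $\varrho^{\mathrm{T}}\bar{z}=0$, and the same endgame (the vanishing state-dependent part handled by Lemma \ref{lemma 4}, the exponentially decaying forcing $(R^{-1}-\Xi^{-1})\nabla\tilde{c}(\bar{y}^{r})$ absorbed afterwards; the paper uses ISS of a globally Lipschitz system where you use a comparison argument, which is equivalent). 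The gap sits exactly where you flag ``the technical core.'' The candidate $V=\tfrac12\tilde{y}^{\mathrm{T}}R\tilde{y}+\tfrac{\beta_{2}}{2}\tilde{z}^{\mathrm{T}}\tilde{z}+\varepsilon\,\tilde{y}^{\mathrm{T}}R\tilde{z}$ does not close: differentiating the middle term produces $\beta_{1}\beta_{2}\,\tilde{z}^{\mathrm{T}}\mathcal{L}\tilde{y}$, and the only damping available to absorb it is $-\varepsilon\beta_{2}\tilde{z}^{\mathrm{T}}R\tilde{z}$ in $\tilde{z}$ and $-\beta_{1}(1-\varepsilon)\lambda_{2}\|\nu\|^{2}$ in the disagreement component $\nu$ of $\tilde{y}$. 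Since that coupling is bilinear in $\tilde{z}$ and $\nu$ with coefficient of order $\beta_{1}\beta_{2}$, Young's inequality forces a condition of the form $\beta_{1}^{2}\beta_{2}^{2}\|\mathcal{L}\|^{2}\lesssim \varepsilon(1-\varepsilon)\lambda_{2}\underline{\varrho}\,\beta_{1}\beta_{2}$, i.e.\ $\beta_{1}\beta_{2}$ bounded above by a fixed graph constant --- incompatible with taking $\beta_{1},\beta_{2}$ sufficiently large. No choice of $\varepsilon$ rescues the coefficient $\beta_{2}/2$ you fixed on $\|\tilde{z}\|^{2}$. Your fallback (linearization plus an absolute-stability lift) is not carried out, and linearization alone would only give local exponential stability, whereas the theorem requires convergence from arbitrary initial conditions.

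The paper's resolution is a different quadratic, $V_{0}=\tfrac12\tilde{y}^{r\mathrm{T}}R\tilde{y}^{r}+\tfrac12(\tilde{y}^{r}+\tilde{z})^{\mathrm{T}}R(\tilde{y}^{r}+\tilde{z})$. The point of the combination $\tilde{y}^{r}+\tilde{z}$ is that $\dot{\tilde{y}}^{r}+\dot{\tilde{z}}=-R^{-1}\Delta_{c}(\tilde{y}^{r})-\beta_{2}\tilde{z}$ contains no Laplacian term, so the $\beta_{1}\beta_{2}$ coupling never appears. The only remaining indefinite term is $-2\beta_{2}\tilde{y}^{r\mathrm{T}}R\tilde{z}$, which is decomposed along the eigenbasis of $\bar{\mathcal{L}}$, annihilated in the consensus direction by $\varrho^{\mathrm{T}}\tilde{z}=0$, and dominated on the orthogonal complement under the achievable conditions $2\varpi>\bar{\iota}^{2}/(4\delta)$, $\beta_{2}\underline{\varrho}>2\delta$ and $\beta_{1}\lambda_{2}>\beta_{2}^{2}/\delta$ of (\ref{paremeters_beta_i}). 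If you substitute this $V_{0}$ for your composite energy, the remainder of your argument (conserved quantity, equilibrium characterization, Lemma \ref{lemma 4}, and the treatment of the decaying forcing) goes through essentially as you wrote it.
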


\begin{proof}
	The proof is given in Appendix \ref{Appendix_1}.
\end{proof}

Next, by taking $ y_{i}^{r} $ as a reference to be tracked by agent $ i $, the optimal output consensus problem can be converted to a reference-tracking problem. To this end, an error system is defined by embedding the optimal coordinator (\ref{algorithm_r}) into the second-order agent dynamics (\ref{dynamics}). More specifically,  let $x_{i}^{\star}=\operatorname{col}(y_{i}^{r}, 0)$ and introduce the transformation $ \bar{x}_i=x_{i}-x_{i}^{\star} $, where $ \bar{x}_{i}= \operatorname{col}(\bar{x}_{i 1}, \bar{x}_{i 2}) $. Then, the dynamics of $ \bar{x}_{i 1} $ and $ \bar{x}_{i 2} $ can be described as follows,
\begin{equation} \label{error_system}
\begin{aligned}
	\dot{\bar{x}}_{i 1}=&\bar{x}_{i 2}-\dot{y}_{i}^{r}, \\
	\dot{\bar{x}}_{i 2}=&\bar{f}_{i1}\left(\bar{x}_{i 1}, \bar{x}_{i 2}, y_{i}^{r}, d\right)+\bar{f}_{i 2}\left(y_{i}^{r}, s^{\star}, d\right)\\
	&+b_{i}(w)\left(u_{i}-u_{i}^{\star}\left(s^{\star}, d\right)\right),
\end{aligned}
\end{equation}
where $ d=\operatorname{col}(v,w,\sigma) $, $\bar{f}_{i1}\left(\bar{x}_{i 1}, \bar{x}_{i 2}, y_{i}^{r}, d\right)=f_{i}\left(x_{i 1}, x_{i 2}, v, w\right)-f_{i}\left(y_{i}^{r}, 0, v, w\right)$, $\bar{f}_{i 2}\left(y_{i}^{r}, s^{\star}, d\right)=f_{i}\left(y_{i}^{r}, 0, v, w\right)-f_{i}\left(s^{\star}, 0, v, w\right)$, $u_{i}^{\star}\left(s^{\star}, d\right)=-f_{i}\left(s^{\star}, 0, v, w\right) / b_{i}(w)$. Recalling that $ b_{i}(w)>0 $ for all $ w\in\mathbb{W} $, $ u_{i}^{\star}\left(s^{\star}, d\right) $ is well-defined. Moreover, it follows from the smoothness of $ f_{i} $ that $ \bar{f}_{i1} $ and $ \bar{f}_{i2} $ are sufficiently smooth functions satisfyinig $ \bar{f}_{i1}\left(0,0, y_{i}^{r}, d\right)=0$ for all $ y_{i}^{r}\in\mathbb{R} $ and $ d\in \mathbb{D} $. 

Consider a dynamic controller of the following form for the error system (\ref{error_system}),
\begin{equation} \label{controller form_error system}
\begin{aligned}
u_{i} = & \bar{\kappa}_{i1}\big(\bar{x}_{i}, \bar{\upsilon}_{j}, j\in \bar{\mathcal{N}}_{i}\big), \\
\dot{\bar{\upsilon}}_{i} = & \bar{\kappa}_{i2}\big(\bar{x}_{i}, \bar{\upsilon}_{j}, j\in \bar{\mathcal{N}}_{i}\big),
\end{aligned}
\end{equation}
where $ \bar{\kappa}_{i1} $ and $ \bar{\kappa}_{i2} $ are sufficiently smooth functions vanishing at the origin, $ \bar{\mathcal{N}}_{i} = \mathcal{N}_{i} \cup \{i\} $ is defined to be the same as that in (\ref{controller form_origin system}), and $ \bar{\upsilon}_{i}\in \mathbb{R}^{n_{i\bar{\upsilon}}} $ is the state of the dynamic controller with its dimention $ n_{i\bar{\upsilon}} $ to be specificed later. Now we are ready to present the following reference-tracking problem. 

\begin{problem} \label{problem_2}
	Given the error system (\ref{error_system}), the optimal coordinator (\ref{algorithm_r}), and nonempty compact sets $ \mathbb{W}\subseteq \mathbb{R}^{n_{w}} $ and $ \mathbb{S}\subseteq \mathbb{R}^{n_{\sigma}} $, which are not necessarily known, design a distributed dynamic feedback controller of the form (\ref{controller form_error system}) such that, for any constant vector $ \operatorname{col}(w,\sigma)\in \mathbb{W}\times \mathbb{S} $, the trajectories of the closed-loop system composed of the error system (\ref{error_system}), the optimal coordinator (\ref{algorithm_r}) and the dynamic controller (\ref{controller form_error system}) is bounded for all $ t\geq 0 $, and the error states $ \bar{x}_{i} $'s tend to the origin as time goes to infinity.
\end{problem}

The following lemma shows that the optimal output consensus Problem \ref{problem} is solved as long as the reference-tracking Problem \ref{problem_2} is solved.
\begin{lemma} \label{lemma_stabilization}
	Suppose that Assumptions \ref{assumption_cost functions}-\ref{assumption_exosysem} hold and there exists a smooth dynamic controller of the form (\ref{controller form_error system}) that solves the Problem \ref{problem_2} for the error system (\ref{error_system}). Then, Problem \ref{problem} can be solved by a distributed dynamic controller composed of (\ref{algorithm_r}) and (\ref{controller form_error system}).
\end{lemma}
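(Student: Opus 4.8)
The plan is to treat this as a coordinate-change argument that decouples the two layers: once the coordinator is shown to deliver good reference signals via Theorem~\ref{proposition1}, and the inner loop is \emph{assumed} to drive the error coordinates to zero, all the properties required in Problem~\ref{problem} follow by inverting the transformation $\bar{x}_i = x_i - x_i^\star$. First I would invoke Theorem~\ref{proposition1}: under Assumptions~\ref{assumption_cost functions} and \ref{graph assumption}, with $z_i(0)=0$ and suitably large $\beta_1,\beta_2$, the coordinator~(\ref{algorithm_r}) generates signals $y_i^r$ that are bounded on $[0,\infty)$ and converge exponentially to $s^\star$. Hence $x_i^\star=\operatorname{col}(y_i^r,0)$ is bounded and converges to $\operatorname{col}(s^\star,0)$.

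Next I would bring in the standing hypothesis that some controller of the form~(\ref{controller form_error system}) solves Problem~\ref{problem_2}. By definition this means the closed loop formed by the error system~(\ref{error_system}), the coordinator~(\ref{algorithm_r}), and that controller has bounded trajectories for all $t\ge 0$, with the error states satisfying $\bar{x}_i\to 0$ as $t\to\infty$. The core step is then to invert the transformation. Writing $x_i=\bar{x}_i+x_i^\star$, the boundedness of $\bar{x}_i$ and of $y_i^r$ gives boundedness of $x_i$; combined with the boundedness of $v$ guaranteed by Assumption~\ref{assumption_exosysem} (cf. Remark~\ref{remark_d}) and that of the controller states, the entire closed loop of Problem~\ref{problem} is bounded for all $t\ge 0$. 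For the output, since $y_i = x_{i1} = \bar{x}_{i1}+y_i^r$, the triangle inequality yields $\lim_{t\to\infty} y_i = \lim_{t\to\infty}\bar{x}_{i1} + \lim_{t\to\infty} y_i^r = 0 + s^\star = s^\star$, so every output converges to the global minimizer, as required.

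Finally I would verify the structural requirement, namely that the controller obtained by stacking the coordinator~(\ref{algorithm_r}) and the reference-tracking controller~(\ref{controller form_error system}) is genuinely a distributed controller of the prescribed form~(\ref{controller form_origin system}). The coordinator uses only the in-neighbor data $y_j^r$ and $\xi_j$ for $j\in\bar{\mathcal{N}}_i$, while the reference-tracking controller depends on $\bar{x}_i=x_i-\operatorname{col}(y_i^r,0)$, which is reconstructed locally from the agent state $x_i$ and the coordinator output $y_i^r$, together with neighboring controller states; hence the augmented controller state $\upsilon_i$ collecting $(y_i^r,z_i,\xi_i,\bar{\upsilon}_i)$ evolves using only information from $\bar{\mathcal{N}}_i$, and no global quantity enters.

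I expect the only delicate point to be this last step, that is, checking that re-expressing the error-coordinate controller~(\ref{controller form_error system}) back in the original coordinates produces admissible smooth functions $\kappa_{i1},\kappa_{i2}$ that vanish at the origin and respect the in-neighbor information constraint of~(\ref{controller form_origin system}), rather than the convergence or boundedness claims, which are immediate from the decomposition $y_i=\bar{x}_{i1}+y_i^r$ and the additivity of the bounds. The remainder is routine bookkeeping assembling the two boundedness statements, one from Theorem~\ref{proposition1} and one from the Problem~\ref{problem_2} hypothesis, into a single conclusion for the original closed loop.
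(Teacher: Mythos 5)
Your proposal is correct and follows essentially the same route as the paper: decompose $y_i - s^\star = \bar{x}_{i1} + (y_i^r - s^\star)$, invoke Theorem~\ref{proposition1} for boundedness and exponential convergence of $y_i^r$ to $s^\star$, and use the Problem~\ref{problem_2} hypothesis to send $\bar{x}_{i1}$ to zero. The extra bookkeeping you flag (boundedness of the full closed loop and the distributed/admissible form of the stacked controller) is sound and merely makes explicit what the paper leaves implicit.
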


\begin{proof}
	By the triangle inequality, one has $ |y_{i}-s^{\star}|\leq |y_{i}-y_{i}^{r}| + |y_{i}^{r}-s^{\star}| $. It follows from Theorem \ref{proposition1} that $ y_{i}^{r} $ is bounded for all $ t\geq 0 $ and $ \lim_{t\to\infty} y_{i}^{r} = s^{\star} $. By the definition of $ \bar{x}_{i} $, one obtains that $ \lim_{t\to\infty} y_{i} = y_{i}^{r} $ as long as Problem \ref{problem_2} is solved. Therefore, one can conclude that $ \lim_{t\to\infty} y_{i} = s^{\star} $, and Problem \ref{problem} is thus solved.
\end{proof}

\subsection{Reference-Tracking Controller Design}

In this subsection, a novel dynamic state feedback controller is developed to solve the reference-tracking Problem \ref{problem_2}. The design of the reference-tracking controller can be accomplished by the following four steps.

\textbf{Step 1.} The error system (\ref{error_system}) is transformed into a new system of relative degree one. To this end, define $\vartheta_{i}=\bar{x}_{i 2}+\gamma \bar{x}_{i 1}$, where $ \gamma>1 $ is a constant to be specified later. Then the dynamics of $ \vartheta_{i} $ can be described as follows, 
\begin{align*}
\dot{\vartheta}_{i} =& \hat{f}_{i}\left(\bar{x}_{i 1}, \vartheta_{i}, y_{i}^{r}, d\right)+\bar{f}_{i 2}\left(y_{i}^{r}, s^{\star}, d\right) \\
&+b_{i}(w)\left(u_{i}-u_{i}^{\star}\left(s^{\star}, d\right)\right)-\gamma \dot{y}_{i}^{r},
\end{align*}
where $\hat{f}_{i}(\bar{x}_{i 1}, \vartheta_{i}, y_{i}^{r}, d)=\bar{f}_{i1}(\bar{x}_{i 1}, \bar{x}_{i 2}, y_{i}^{r}, d)+\gamma(-\gamma \bar{x}_{i 1}+\vartheta_{i})$ is smooth and satisfies $\hat{f}_{i}\left(0,0, y_{i}^{r}, d\right)=0$ for all $ y_{i}^{r}\in\mathbb{R} $ and $ d\in \mathbb{D} $. 

It can be seen that to prove that both $ \bar{x}_{i 1} $ and $ \bar{x}_{i 2} $ asymptotically converge to the origin, it is sufficient to show that $ (\bar{x}_{i 1},\vartheta_{i})=(0,0) $ is an asymptotically stable equilibrium point of the following system of relative degree one,
\begin{equation} \label{error_system_1}
\begin{aligned}
\dot{\bar{x}}_{i 1} =&-\gamma \bar{x}_{i 1}+\vartheta_{i}-\dot{y}_{i}^{r}, \\
\dot{\vartheta}_{i} =& \hat{f}_{i}\left(\bar{x}_{i 1}, \vartheta_{i}, y_{i}^{r}, d\right)+\bar{f}_{i 2}\left(y_{i}^{r}, s^{\star}, d\right)\\
&+b_{i}(w)\left(u_{i}-u_{i}^{\star}\left(s^{\star}, d\right)\right)-\gamma \dot{y}_{i}^{r}.
\end{aligned}
\end{equation}

\textbf{Step 2.} An internal model is designed such that the feed-forward variable $ u_{i}^{\star}\left(s^{\star}, d\right) $ can be asymptotically reproduced. Due to the presence of uncertain parameter $ d $, the feed-forward term $ u_{i}^{\star}\left(s^{\star}, d\right) $ in (\ref{error_system_1}) is unavailable for feedback. To tackle this challenge, we need an additional standard assumption \cite{huang2004nonlinear,Wang2015distributed}.

\begin{assumption} \label{assumption_u}
	The functions $u_{i}^{\star}\left(s^{\star}, d\right) = u_{i}^{\star}\left(s^{\star}, v,w,\sigma\right) $, $ i=1,2,\ldots,N $ are polynomials in $v$ with coefficients depending on $ s^{\star} $, $w$ and $\sigma$.
\end{assumption}

Now, we are ready to design an internal model to generate the feed-forward term $ u_{i}^{\star}\left(s^{\star}, d\right) $. More specifically, under Assumptions \ref{assumption_exosysem} and \ref{assumption_u}, there exist integers $s_{i}, i=1,2, \ldots, N$ such that for all $\operatorname{col}(w, \sigma) \in \mathbb{W} \times \mathbb{S}$, one has
\begin{equation*}
\begin{aligned}
	\frac{\mathrm{d}^{s_{i}} u_{i}^{\star}\left(s^{\star}, d\right)}{\mathrm{d} t^{s_{i}}}= &\ell_{i 1}(\sigma) u_{i}^{\star}\left(s^{\star}, d\right)+\ell_{i 2}(\sigma) \frac{\mathrm{d} u_{i}^{\star}\left(s^{\star}, d\right)}{\mathrm{d} t}\\
	&+\cdots+\ell_{i s_{i}}(\sigma) \frac{\mathrm{d}^{\left(s_{i}-1\right)} u_{i}^{\star}\left(s^{\star}, d\right)}{\mathrm{d} t^{\left(s_{i}-1\right)}},
\end{aligned}
\end{equation*}
where $\ell_{i 1}(\sigma),\ell_{i 2}(\sigma), \ldots \ell_{i s_{i}}(\sigma)$, $i=1,2, \ldots, N$ are scalars such that the roots of the polynomials $P_{i}^{\sigma}(v)=v^{s_{i}}-\ell_{i 1}(\sigma)-\ell_{i 2}(\sigma) v-\cdots-\ell_{i s_{i}}(\sigma) v^{s_{i}-1}$ are distinct with zero real parts for all $\sigma \in \mathbb{S}$. Let
\begin{align} \label{Phi_Gamma}
\Phi_{i}(\sigma)\!=\! & \left[\!\!\!\begin{array}{cccc}
0 & 1 & \!\cdots\! & 0 \\
\vdots & \vdots & \!\ddots\! & \vdots \\
0 & 0 & \!\cdots\! & 1 \\
\ell_{i 1}(\sigma) & \ell_{i 2}(\sigma) & \!\cdots\! & \ell_{i s_{i}}(\sigma)
\end{array}\!\!\!\right]\!, ~
\Gamma_{i}=\!\!\!\!\! & \left[\begin{array}{cccc}
1 \\ 0 \\ \vdots \\ 0
\end{array}\right]^{\operatorname{T}}\!\!\!.
\end{align}
It then can be verified that $\left(\Gamma_{i}, \Phi_{i}(\sigma)\right),~ i=1,2,\ldots,N $ are observable for all $\sigma \in \mathbb{S}$. Let $\tau_{i}\left(s^{\star}, d\right)=\operatorname{col}\Big(u_{i}^{\star}\left(s^{\star}, d\right), \frac{\mathrm{d} u_{i}^{\star}\left(s^{\star}, d\right)}{\mathrm{d} t}, \ldots, \frac{\mathrm{d}^{\left(s_{i}-1\right)} u_{i}^{\star}\left(s^{\star}, d\right)}{\mathrm{d} t^{\left(s_{i}-1\right)}}\Big)$. One thus has
\begin{align} \label{dot_tau}
\!\!\dot{\tau}_{i}\left(s^{\star}, d\right)=\Phi_{i}(\sigma) \tau_{i}\left(s^{\star}, d\right), \quad 
u_{i}^{\star}\left(s^{\star}, d\right)=\Gamma_{i} \tau_{i}\left(s^{\star}, d\right).
\end{align}

Let $\left(M_{i}, N_{i}\right), i=1,2, \ldots, N$ be any controllable pairs, where $M_{i} \in \mathbb{R}^{s_{i} \times s_{i}}$ is a Hurwitz matrix, and $N_{i} \in \mathbb{R}^{s_{i} \times 1}$ is a column vector. Then the following internal model is proposed,
\begin{equation} \label{internal_model}
\dot{\eta}_{i}=M_{i} \eta_{i}+N_{i} u_{i}, \quad i=1,2, \ldots, N.
\end{equation}
Since for all $\sigma \in \mathbb{S}$, the spectra of $\Phi_{i}(\sigma)$ and $M_{i}$ are disjoint, there exists a nonsingular matrix $T_{i}(\sigma)$ satisfying the Sylvester equation,
\begin{equation} \label{Sylvester_equation}
T_{i}(\sigma) \Phi_{i}(\sigma)-M_{i} T_{i}(\sigma)=N_{i} \Gamma_{i}.
\end{equation}

Let $\theta_{i}\left(s^{\star}, d\right)=T_{i}(\sigma) \tau_{i}\left(s^{\star}, d\right)$ and $\tilde{\eta}_{i}=\eta_{i}-\theta_{i}\left(s^{\star}, d\right)-b_{i}^{-1}(w) N_{i} \vartheta_{i}$. By using (\ref{dot_tau}) and (\ref{Sylvester_equation}), one can deduce that
\begin{align} \label{dot_tilde_eta}
\dot{\tilde{\eta}}_{i} \!= & M_{i} \tilde{\eta}_{i}\!+\!b_{i}^{-\!1}\!(w) M_{i} N_{i} \vartheta_{i}\!-\!b_{i}^{-\!1}\!(w) N_{i} \dot{\vartheta}_{i} \!+\! N_{i}\big(u_{i}\!-\!u_{i}^{\star}\!(s^{\star}\!, d)\big) \notag\\
= & M_{i} \tilde{\eta}_{i}+\tilde{f}_{i 1}\left(\bar{x}_{i 1}, \vartheta_{i}, y_{i}^{r}, d\right) \!-\!b_{i}^{-1}(w) N_{i}\left(\bar{f}_{i 2} \!-\!\gamma \dot{y}_{i}^{r}\right)\!,
\end{align}
where $\tilde{f}_{i 1}\left(\bar{x}_{i 1}, \vartheta_{i}, y_{i}^{r}, d\right)=-b_{i}^{-1}(w) N_{i} \hat{f}_{i}\left(\bar{x}_{i 1}, \vartheta_{i}, y_{i}^{r}, d\right)+b_{i}^{-1}(w) M_{i} N_{i} \vartheta_{i}$ is smooth and satisfies $\tilde{f}_{i 1}\left(0,0, y_{i}^{r}, d\right)=0$ for all $ y_{i}^{r}\in\mathbb{R} $ and $ d\in \mathbb{D} $. Meanwhile, by referring to (\ref{dot_tau}), one has $u_{i}^{\star}\left(s^{\star}, d\right)=\Gamma_{i} \tau_{i}\left(s^{\star}, d\right)=\Gamma_{i} T_{i}^{-1}(\sigma) \theta_{i}\left(s^{\star}, d\right)\triangleq \Psi_{i}(\sigma) \theta_{i}\left(s^{\star}, d\right)$. It then follows that
\begin{align} \label{b_i_u_star}
& b_{i}(w)u_{i}^{\star}\left(s^{\star}, d\right) =b_{i}(w) \Psi_{i}(\sigma) \theta_{i}\left(s^{\star}, d\right) \notag\\
=& -b_{i}(w) \Psi_{i}(\sigma) \tilde{\eta}_{i} +b_{i}(w) \Psi_{i}(\sigma) \eta_{i} -\Psi_{i}(\sigma) N_{i} \vartheta_{i}.
\end{align}
Thus, by using (\ref{error_system_1}) and (\ref{b_i_u_star}), one obtains that
\begin{align} \label{dot_tilde_vartheta}
\dot{\vartheta}_{i} =&\hat{f}_{i}\left(\bar{x}_{i 1}, \vartheta_{i}, y_{i}^{r}, d\right)+b_{i}(w) \Psi_{i}(\sigma) \tilde{\eta}_{i} -b_{i}(w) \Psi_{i}(\sigma) \eta_{i} \notag\\
&+\Psi_{i}(\sigma) N_{i} \vartheta_{i} +b_{i}(w) u_{i} +\bar{f}_{i 2}\left(y_{i}^{r}, s^{\star}, d\right)-\gamma \dot{y}_{i}^{r} \notag\\
\triangleq & \tilde{f}_{i 2}\left(\bar{x}_{i 1}, \tilde{\eta}_{i}, \vartheta_{i}, y_{i}^{r}, d\right)+\bar{f}_{i 2}\left(y_{i}^{r}, s^{\star}, d\right) \notag\\
& -b_{i}(w) \Psi_{i}(\sigma) \eta_{i}+b_{i}(w) u_{i} -\gamma \dot{y}_{i}^{r},
\end{align}
where $\tilde{f}_{i 2}\left(\bar{x}_{i 1}, \tilde{\eta}_{i}, \vartheta_{i}, y_{i}^{r}, d\right) =\hat{f}_{i}\left(\bar{x}_{i 1}, \vartheta_{i}, y_{i}^{r}, d\right)+b_{i}(w) \Psi_{i}(\sigma) \tilde{\eta}_{i}+\Psi_{i}(\sigma) N_{i} \vartheta_{i}$ is a smooth function satisfying $\tilde{f}_{i 2}\left(0,0,0, y_{i}^{r}, d\right)=0$ for all $ y_{i}^{r}\in\mathbb{R} $ and $ d\in \mathbb{D} $. 

Let $ \epsilon_{i}\left(y_{i}^{r},\dot{y}_{i}^{r}, s^{\star}, d\right)= \bar{f}_{i 2}\left(y_{i}^{r}, s^{\star}, d\right) -\gamma \dot{y}_{i}^{r} $. Then by noting the equation (\ref{dot_tilde_eta}) resulting from the internal model (\ref{internal_model}), we can obtain the following augmented error system,
\begin{equation} \label{augmented system 2}
\begin{aligned}
\dot{\bar{x}}_{i 1}=&-\gamma \bar{x}_{i 1}+\vartheta_{i}-\dot{y}_{i}^{r}, \\
\dot{\tilde{\eta}}_{i}=&M_{i} \tilde{\eta}_{i}+\tilde{f}_{i 1}-b_{i}^{-1}(w) N_{i}\epsilon_{i}, \\
\dot{\vartheta}_{i}=&\tilde{f}_{i 2}+b_{i}(w) \big(u_{i} - \Psi_{i}(\sigma) \eta_{i}\big) +\epsilon_{i}.
\end{aligned}
\end{equation}

\textbf{Step 3.} A result on the ISS property of $\bar{x}_{i 1}$- and $ \tilde{\eta}_{i} $-subsystems of (\ref{augmented system 2}) is presented. Denote $ \chi_{i}=\operatorname{col}(\bar{x}_{i1}, \tilde{\eta}_{i}) $. The following lemma describes the ISS property on $ \chi_{i} $-subsystem.

\begin{lemma} \label{lemma_chi_i}
	Consider $ \chi_{i} $-subsystem of (\ref{augmented system 2}). There exists a continuously differentiable function $ V_{i 1}(\chi_{i}) $ such that, for all $ y_{i}^{r}\in\mathbb{R} $ and $ d\in \mathbb{D} $, the following inequalities are satisfied,
	\begin{align*}
	\underline{\alpha}_{i\chi}(\|\chi_{i}\|) \leq & V_{i1}(\chi_{i}) \leq \overline{\alpha}_{i\chi}(\|\chi_{i}\|), \\
	\dot{V}_{i1} \leq & -\chi_{i}^{2} +\gamma_{i\vartheta} \hat{\phi}_{i\vartheta}\left(\vartheta_{i}\right) \vartheta_{i}^{2} \notag\\
	& +\gamma_{ir} \hat{\phi}_{ir}\left(\dot{y}_{i}^{r}\right)\left|\dot{y}_{i}^{r}\right|^{2}  +\gamma_{i\epsilon}|\epsilon_{i}|^{2},
	\end{align*}
	for some known smooth functions $ \underline{\alpha}_{i\chi}, \overline{\alpha}_{i\chi}\in \mathcal{K}_{\infty} $, $ \hat{\phi}_{i\vartheta}, \hat{\phi}_{ir} >1 $, and unknown constants $ \gamma_{i\vartheta}, \gamma_{ir}, \gamma_{i\epsilon}>1 $.	
\end{lemma}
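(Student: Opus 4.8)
The plan is to treat the $\chi_i=\operatorname{col}(\bar x_{i1},\tilde\eta_i)$-subsystem of (\ref{augmented system 2}) as a cascade driven by the inputs $\vartheta_i$, $\dot y_i^r$ and $\epsilon_i$, and to build a single composite ISS-Lyapunov function exploiting two facts: the $\bar x_{i1}$-channel is exponentially damped through $-\gamma\bar x_{i1}$ with $\gamma>1$, and the $\tilde\eta_i$-channel is driven by the Hurwitz matrix $M_i$ of the internal model (\ref{internal_model}). First I would fix the unique $P_i=P_i^{\mathrm T}>0$ solving $M_i^{\mathrm T}P_i+P_iM_i=-I$ and set $V_{i1}(\chi_i)=\Theta_i(\bar x_{i1})+\tilde\eta_i^{\mathrm T}P_i\tilde\eta_i$, where $\Theta_i$ is a smooth, radially unbounded, positive-definite function (quadratic near the origin). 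The sandwich inequalities $\underline\alpha_{i\chi}(\|\chi_i\|)\le V_{i1}\le\overline\alpha_{i\chi}(\|\chi_i\|)$ with $\underline\alpha_{i\chi},\overline\alpha_{i\chi}\in\mathcal K_\infty$ are then immediate, since $\Theta_i$ is proper and $\lambda_{\min}(P_i)\|\tilde\eta_i\|^2\le\tilde\eta_i^{\mathrm T}P_i\tilde\eta_i\le\lambda_{\max}(P_i)\|\tilde\eta_i\|^2$.

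Differentiating along (\ref{augmented system 2}) and (\ref{dot_tilde_eta}) gives $\dot V_{i1}=\Theta_i'(\bar x_{i1})(-\gamma\bar x_{i1}+\vartheta_i-\dot y_i^r)+2\tilde\eta_i^{\mathrm T}P_i(M_i\tilde\eta_i+\tilde f_{i1}-b_i^{-1}(w)N_i\epsilon_i)$. The Lyapunov equation turns the $M_i$-term into $-\|\tilde\eta_i\|^2$, while the disturbance term $-2\tilde\eta_i^{\mathrm T}P_ib_i^{-1}(w)N_i\epsilon_i$ is split by Young's inequality into $\tfrac14\|\tilde\eta_i\|^2+\gamma_{i\epsilon}|\epsilon_i|^2$, where $\gamma_{i\epsilon}$ absorbs the unknown but finite magnitudes $\|P_i\|$, $\sup_{w\in\mathbb W}b_i^{-1}(w)$ and $\|N_i\|$, finiteness following from compactness of $\mathbb W$. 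The cross terms $\Theta_i'(\bar x_{i1})\vartheta_i$ and $-\Theta_i'(\bar x_{i1})\dot y_i^r$ are likewise separated so that their $\bar x_{i1}$-part is charged against the dissipation $-\gamma\bar x_{i1}\Theta_i'(\bar x_{i1})$, while their $\vartheta_i$- and $\dot y_i^r$-parts feed, respectively, the $\gamma_{i\vartheta}\hat\phi_{i\vartheta}(\vartheta_i)\vartheta_i^2$ and $\gamma_{ir}\hat\phi_{ir}(\dot y_i^r)|\dot y_i^r|^2$ budget.

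The crux is the coupling $2\tilde\eta_i^{\mathrm T}P_i\tilde f_{i1}$, where $\tilde f_{i1}$ is smooth and vanishes at $(\bar x_{i1},\vartheta_i)=(0,0)$ but, crucially, is \emph{not} globally Lipschitz. I would invoke the Hadamard/mean-value representation $\tilde f_{i1}(\bar x_{i1},\vartheta_i,y_i^r,d)=\bar x_{i1}\,g_{i1}+\vartheta_i\,g_{i2}$ with $g_{i1},g_{i2}$ smooth, and then use the compactness of $\mathbb D$ (Remark \ref{remark_d}) together with the boundedness of $y_i^r$ established in Theorem \ref{proposition1} to majorize the $(y_i^r,d)$-dependence, so that $|g_{i1}|,|g_{i2}|$ are bounded by known smooth functions of $(\bar x_{i1},\vartheta_i)$ times unknown constants. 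Young's inequality applied channel by channel then yields a bound of the form $2\tilde\eta_i^{\mathrm T}P_i\tilde f_{i1}\le\tfrac14\|\tilde\eta_i\|^2+\gamma_{i\vartheta}\hat\phi_{i\vartheta}(\vartheta_i)\vartheta_i^2+R_i(\bar x_{i1})$, with $\hat\phi_{i\vartheta}$ the chosen known majorant carrying the \emph{shape} of the growth, $\gamma_{i\vartheta}$ its \emph{unknown magnitude}, and $R_i(\bar x_{i1})$ a remainder to be absorbed into the $\bar x_{i1}$-dissipation. Collecting everything leaves $\dot V_{i1}\le-\|\chi_i\|^2+\gamma_{i\vartheta}\hat\phi_{i\vartheta}(\vartheta_i)\vartheta_i^2+\gamma_{ir}\hat\phi_{ir}(\dot y_i^r)|\dot y_i^r|^2+\gamma_{i\epsilon}|\epsilon_i|^2$.

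The main obstacle is exactly this bounding of $\tilde f_{i1}$. Because $f_i$ need not be globally Lipschitz, the coefficients $g_{i1},g_{i2}$ grow with $(\bar x_{i1},\vartheta_i)$, so the $\bar x_{i1}$-coupling into the $\tilde\eta_i$-channel is super-linear and cannot be dominated by a fixed multiple of $\bar x_{i1}^2$. The resolution I would pursue is to shape $\Theta_i$ so that the linear damping $-\gamma\bar x_{i1}$ produces a super-quadratic dissipation $-\gamma\bar x_{i1}\Theta_i'(\bar x_{i1})$ strong enough to swallow $R_i(\bar x_{i1})$, while all $\vartheta_i$-, $\dot y_i^r$- and $\epsilon_i$-growth is routed into nonlinear gains; the decisive bookkeeping is to keep the shape of the growth inside the known functions $\hat\phi_{i\vartheta},\hat\phi_{ir}$ and to defer every uncertain amplitude (through $P_i$, $b_i(w)$, $N_i$, the size of $\mathbb D$, and the bound on $y_i^r$) to the unknown constants $\gamma_{i\vartheta},\gamma_{ir},\gamma_{i\epsilon}$. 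This separation is precisely what the later adaptive stabilizer (\ref{controller_augmented_1}) is built to exploit, cancelling the known $\rho_i(\vartheta_i)$-shape and learning the unknown gain $k_i$ on line.
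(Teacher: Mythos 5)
Your proposal is correct and follows essentially the same route as the paper: a composite Lyapunov function built from a reshaped positive-definite function of $\bar{x}_{i1}$ plus the quadratic form $\tilde{\eta}_{i}^{\operatorname{T}}P_{i}\tilde{\eta}_{i}$ with $P_{i}M_{i}+M_{i}^{\operatorname{T}}P_{i}=-I$, with the superlinear coupling $\tilde{f}_{i1}$ bounded by known smooth functional coefficients times unknown constants and then absorbed into a strengthened $\bar{x}_{i1}$-dissipation. Your ``shaping of $\Theta_{i}$'' is precisely the changing supply function technique of \cite{sontag1995changing} that the paper invokes, and your Hadamard factorization of $\tilde{f}_{i1}$ is the content of Lemma 11.1 in \cite{chen2015stabilization} used there.
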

\begin{proof}
	The proof is given in Appendix \ref{Appendix_2}.
\end{proof}

\textbf{Step 4.} An adaptive stabilizer for the augmented system (\ref{augmented system 2}) is developed. More specifically, the decentralized adaptive stabilizer $ u_{i} $ of agent $ i $ is proposed as follows,
\begin{equation} \label{controller_augmented_1}
\begin{aligned}
u_{i} &=-k_{i} \rho_{i}\left(\vartheta_{i}\right) \vartheta_{i}+ \bar{\Psi}_{i} \eta_{i}, \\
\dot{k}_{i} &=\rho_{i}\left(\vartheta_{i}\right) \vartheta_{i}^{2},\quad
\dot{\bar{\Psi}}_{i} =-\eta_{i}^{\operatorname{T}}\vartheta_{i},
\end{aligned}
\end{equation}
where $ \rho_{i} \geq 1 $ is a smooth function to be specified later. Then the closed-loop system composed of the augmented system (\ref{augmented system 2}) and the adaptive stabilizer (\ref{controller_augmented_1}) is given as follows,
\begin{equation} \label{closed-loop system 1}
	\begin{aligned} 
	\dot{\bar{x}}_{i 1}=&-\gamma \bar{x}_{i 1}+\vartheta_{i}-\dot{y}_{i}^{r}, \\
	\dot{\tilde{\eta}}_{i}=&M_{i} \tilde{\eta}_{i}+\tilde{f}_{i 1}-b_{i}^{-1}(w) N_{i}\epsilon_{i}, \\
	\dot{\vartheta}_{i}=&\tilde{f}_{i 2}\!+\!b_{i} \big(\bar{\Psi}_{i}\!-\!\Psi_{i}(\sigma)\big) \eta_{i} \!-\!b_{i} k_{i} \rho_{i}\left(\vartheta_{i}\right) \vartheta_{i} +\epsilon_{i},\\
	\dot{k}_{i} =&\rho_{i}\left(\vartheta_{i}\right) \vartheta_{i}^{2},\quad
	\dot{\bar{\Psi}}_{i} =-\eta_{i}^{\operatorname{T}}\vartheta_{i}.
	\end{aligned}
\end{equation}

By noting Lemma \ref{lemma_chi_i} and applying the changing supply function technique to $ \chi_{i} $-subsystem, given any smooth function $ \Delta_{i\chi}(\chi_{i})>0 $, there exists a continuously differentiable function $\bar{V}_{i1}(\chi_{i})$ such that, along the trajectories of (\ref{augmented system 2}), one has
\begin{align*}
\underline{\alpha}_{i\chi}^{0}&(\|\chi_{i}\|) \leq \bar{V}_{i1}(\chi_{i}) \leq \overline{\alpha}_{i\chi}^{0}(\|\chi_{i}\|), \\
\dot{\bar{V}}_{i1} \leq & -\Delta_{i\chi}(\chi_{i})\chi_{i}^{2} +\gamma_{i\vartheta}^{0} \hat{\phi}_{i\vartheta}^{0}\left(\vartheta_{i}\right) \vartheta_{i}^{2} \notag\\
& +\gamma_{ir}^{0} \hat{\phi}_{ir}^{0}\left(\dot{y}_{i}^{r}\right)\left|\dot{y}_{i}^{r}\right|^{2}  +\gamma_{i\epsilon}^{0}|\epsilon_{i}|^{2} \label{dot_bar_V_1},
\end{align*}
for some known smooth functions $ \underline{\alpha}_{i\chi}^{0}, \overline{\alpha}_{i\chi}^{0}\in \mathcal{K}_{\infty} $, $ \hat{\phi}_{i\vartheta}^{0}, \hat{\phi}_{ir}^{0} >1 $ and unknown constants $ \gamma_{i\vartheta}^{0}, \gamma_{ir}^{0}, \gamma_{i\epsilon}^{0}>1 $.

Define $ \tilde{k}_{i}=k_{i}-k_{0} $ and $ \tilde{\Psi}_{i}= \bar{\Psi}_{i} - \Psi_{i}(\sigma) $ with $ k_{0} $ to be specified later. Consider the following Lyapunov function candidate,
\begin{equation} \label{V_i2_1}
	V_{i2}(\bar{x}_{i1},\tilde{\eta},\vartheta_{i},\tilde{k}_{i},\tilde{\Psi}_{i})\!=\! \bar{V}_{i1}+\frac{1}{2}\vartheta_{i}^{2}+ \frac{1}{2}b_{i}\tilde{k}_{i}^{2} + \frac{1}{2}b_{i}\|\tilde{\Psi}_{i}\|^{2}.
\end{equation}
The derivative of $ V_{i 2} $ along the trajectories of dynamics (\ref{closed-loop system 1}) can be described as follows,
\begin{align} \label{dot_V_i2_1}
\dot{V}_{i 2}=& \dot{\bar{V}}_{i 1}+\vartheta_{i} \dot{\vartheta}_{i}+b_{i}\left(k_{i}-k_{0}\right) \dot{k}_{i}+b_{i} \dot{\bar{\Psi}}\big(\bar{\Psi}_{i} - \Psi_{i}(\sigma)\big)^{\operatorname{T}} \notag\\
=& \dot{\bar{V}}_{i 1}+\vartheta_{i} \tilde{f}_{i 2}-b_{i} k_{0} \rho_{i}\left(\vartheta_{i}\right) \vartheta_{i}^{2}+ \vartheta_{i} \epsilon_{i} \notag\\
\leq &-\!\Delta_{i\chi}(\chi_{i})\chi_{i}^{2} +\gamma_{i\vartheta}^{0} \hat{\phi}_{i\vartheta}^{0}\left(\vartheta_{i}\right) \vartheta_{i}^{2}  +\gamma_{ir}^{0} \hat{\phi}_{ir}^{0}\left(\dot{y}_{i}^{r}\right)\left|\dot{y}_{i}^{r}\right|^{2}  \notag\\
&+\gamma_{i\epsilon}^{0}|\epsilon_{i}|^{2} +\vartheta_{i} \tilde{f}_{i 2}-b_{i} k_{0} \rho_{i}\left(\vartheta_{i}\right) \vartheta_{i}^{2}+ \vartheta_{i} \epsilon_{i}.
\end{align}

Note that $ \tilde{f}_{i 2}\left(\bar{x}_{i 1}, \tilde{\eta}_{i}, \vartheta_{i}, y_{i}^{r}, d\right) $ is sufficiently smooth and satisfies $\tilde{f}_{i 2}\left(0,0,0, y_{i}^{r}, d\right)=0$. By Lemma 11.1 in \cite{chen2015stabilization}, there exist known smooth functions $ \hat{\phi}_{i\chi}, \hat{\phi}_{i\vartheta} >1$ and unknown constant $ \hat{\gamma}_{i1}>1 $ such that, for all $ y_{i}^{r}\in\mathbb{R} $ and $ d\in\mathbb{D} $, the following inequality is satisfied,
\begin{align} \label{tilde_f_i2}
\big\|\tilde{f}_{i 2}\big\|^{2} \leq \hat{\gamma}_{i 1} \left(\hat{\phi}_{i\chi}\left(\chi_{i}\right)\chi_{i}^{2}  +\hat{\phi}_{i\vartheta}\left(\vartheta_{i}\right)\vartheta_{i}^{2}\right).
\end{align}
Meanwhile, the following two inequalities hold,
\begin{align}
	\vartheta_{i} \tilde{f}_{i 2}\leq & \frac{\hat{\gamma}_{i1}}{4}\vartheta_{i}^{2}+ \frac{1}{\hat{\gamma}_{i1}}\|\tilde{f}_{i 2}\|^{2}, \label{vartheta_tilde_f_i2}\\
	\vartheta_{i} \epsilon_{i} \leq & \frac{1}{4}\vartheta_{i}^{2} + |\epsilon_{i}|^{2}. \label{vartheta_epsilon_i}
\end{align}
Then substituting (\ref{tilde_f_i2})-(\ref{vartheta_epsilon_i}) into (\ref{dot_V_i2_1}) yields
\begin{align} \label{dot_V_i2_2}
\dot{V}_{i 2} \leq &-\big(\Delta_{i\chi}(\chi_{i})-\hat{\phi}_{i\chi}\left(\chi_{i}\right)\big)\chi_{i}^{2} -\Big(b_{i} k_{0} \rho_{i}\left(\vartheta_{i}\right) \notag\\
&-\gamma_{i\vartheta}^{0} \hat{\phi}_{i\vartheta}^{0}\left(\vartheta_{i}\right) -\hat{\phi}_{i\vartheta}\left(\vartheta_{i}\right) - \frac{\hat{\gamma}_{i1}+1}{4}\Big) \vartheta_{i}^{2}  \notag\\
&+\gamma_{ir}^{0} \hat{\phi}_{ir}^{0}\left(\dot{y}_{i}^{r}\right)\left|\dot{y}_{i}^{r}\right|^{2} +\big(\gamma_{i\epsilon}^{0}+1\big)|\epsilon_{i}|^{2} .
\end{align}

Furthermore, since $ f_{i} $ is assumed to be sufficiently smooth, it follows from Lemma 3.2 in \cite{khalil2002nonlinear} that $ f_{i}\left(\cdot, \cdot, v, w\right) $ is locally Lipschitz on $ \mathbb{R}^{2} \times \mathbb{R}^{n_{v}} \times \mathbb{R}^{n_{w}} $. Thus, with the result that $ y_{i}^{r} $ is bounded established in Theorem \ref{proposition1}, there exist a smooth function $ l_{i}(\cdot) $ and a constant $ \tilde{l}_{i} $ such that
\begin{align} \label{f_i2}
\big|\bar{f}_{i 2}\left(y_{i}^{r}, s^{\star}, d\right)\big|=&\big|f_{i}\left(y_{i}^{r}, 0, v, w\right)-f_{i}\left(s^{\star}, 0, v, w\right)\big|\notag\\
\leq & l_{i}\big(|\tilde{y}_{i}^{r}|+|s^{\star}|\big)\big|y_{i}^{r}-s^{\star}\big| \notag\\
\leq & \tilde{l}_{i}\big|y_{i}^{r}-s^{\star}\big|= \tilde{l}_{i}|\tilde{y}_{i}^{r}|.
\end{align}
Therefore, by using (\ref{f_i2}), one has
\begin{align} \label{epsilon_i_1}
| \epsilon_{i}\left(y_{i}^{r},\dot{y}_{i}^{r}, s^{\star}, d\right) |^{2}=& | \bar{f}_{i 2}\left(y_{i}^{r}, s^{\star}, d\right) -\gamma \dot{y}_{i}^{r} |^{2} \notag\\
\leq & 2\big| \bar{f}_{i 2}\left(y_{i}^{r}, s^{\star}, d\right) \big|^{2} + 2\gamma^{2}\big|  \dot{y}_{i}^{r} \big|^{2} \notag\\
\leq & 2\tilde{l}_{i}^{2}|\tilde{y}_{i}^{r}|^{2} + 2\gamma^{2}|\dot{y}_{i}^{r}|^{2}.
\end{align}

Then, substituting (\ref{epsilon_i_1}) into (\ref{dot_V_i2_2}) leads to
\begin{align} \label{dot_V_i2_3}
\dot{V}_{i 2} \leq &-\!\big(\Delta_{i\chi}(\chi_{i})\!-\!\hat{\phi}_{i\chi}(\chi_{i})\big)\chi_{i}^{2} -\!\Big(b_{i} k_{0} \rho_{i}\left(\vartheta_{i}\right) \!-\! \frac{\hat{\gamma}_{i1}+1}{4} \notag\\
&-\gamma_{i\vartheta}^{0} \hat{\phi}_{i\vartheta}^{0}\left(\vartheta_{i}\right) -\hat{\phi}_{i\vartheta}\left(\vartheta_{i}\right)\Big) \vartheta_{i}^{2} +2\tilde{l}_{i}^{2}\big(\gamma_{i\epsilon}^{0}+1\big)|\tilde{y}_{i}^{r}|^{2}   \notag\\
&+\big(\gamma_{ir}^{0} \hat{\phi}_{ir}^{0}\left(\dot{y}_{i}^{r}\right)+2\gamma^{2}(\gamma_{i\epsilon}^{0}+1)\big)\left|\dot{y}_{i}^{r}\right|^{2}.
\end{align}
Let $ \Delta_{i\chi}(\chi_{i}) \geq \hat{\phi}_{i\chi}\left(\chi_{i}\right)+1 $, $ k_{0}\geq b_{i}^{-1}\big(\gamma_{i\vartheta}^{0}+\frac{\hat{\gamma}_{i1}+1}{4}+1\big)+1 $, $ \rho_{i}\big(\vartheta_{i}\big) \geq \max\big\{\hat{\phi}_{i\vartheta}^{0}\left(\vartheta_{i}\right) , \hat{\phi}_{i\vartheta}\left(\vartheta_{i}\right)\big\} +1 $, $ \tilde{\gamma}_{i r}\geq 2\tilde{l}_{i}^{2}\big(\gamma_{i\epsilon}^{0}+1\big)+1 $ and $ \hat{\gamma}_{i r} \geq \gamma_{ir}^{0} \hat{\phi}_{ir}^{0}\left(\dot{y}_{i}^{r}\right)+2\gamma^{2}\big(\gamma_{i\epsilon}^{0}+1\big) $. Finally, (\ref{dot_V_i2_3}) can be rewritten as follows,
\begin{align} \label{dot_V_i2_4}
\dot{V}_{i 2} \leq &-\chi_{i}^{2} - \vartheta_{i}^{2} +\tilde{\gamma}_{i r}|\tilde{y}_{i}^{r}|^{2} +\hat{\gamma}_{i r}\left|\dot{y}_{i}^{r}\right|^{2}.
\end{align}

Now we are ready to present the main result of this paper as follows.
\begin{theorem} \label{theorem 2}
	Under Assumptions \ref{assumption_cost functions}--\ref{assumption_u}, the optimal output consensus Problem \ref{problem} for the second-order uncertain nonlinear system (\ref{dynamics}) over a weight-unbalanced directed network is solved by the following distributed dynamic state feedback controller,
	\begin{subequations}\label{controller_1}
	\begin{align} 
	\begin{split} \label{controller_1_1}
	u_{i} &=-k_{i} \rho_{i}\left(\vartheta_{i}\right) \vartheta_{i}+\bar{\Psi}_{i} \eta_{i}, \\[1mm]
	\dot{k}_{i} &=\rho_{i}\left(\vartheta_{i}\right) \vartheta_{i}^{2},\quad
	\dot{\bar{\Psi}}_{i} =-\eta_{i}^{\operatorname{T}}\vartheta_{i}, 
	\end{split}\qquad \qquad \qquad \quad ~~\!\Bigg\}\\[2mm]
	\begin{split} \label{controller_1_2}
	\dot{\eta}_{i} &=M_{i} \eta_{i}+N_{i} u_{i}, 
	\end{split} \qquad \qquad \qquad \quad \qquad \qquad \quad ~~~\!\Big\}\\
	\begin{split} \label{controller_1_3}
	\dot{y}_{i}^{r} & =-\frac{1}{\xi_{i}^{i}} \nabla c_{i}\left(y_{i}^{r}\right)-\beta_{1} \sum_{i=1}^{N} a_{i j}\left(y_{i}^{r}-y_{j}^{r}\right)-\beta_{2} z_{i},  \\[-1.5mm]
	\dot{z}_{i} & =\beta_{1} \sum_{i=1}^{N} a_{i j}\left(y_{i}^{r}-y_{j}^{r}\right), \quad z_{i}(0)=0, \\[-1mm]
	\dot{\xi}_{i} &= -\sum_{j=1}^{N}a_{ij}(\xi_{i}-\xi_{j}),
	\end{split} ~\Biggggr\}
	\end{align}
	\end{subequations}
	where $ \rho_{i}(\cdot) \geq 1 $ is chosen to be the same as in (\ref{dot_V_i2_3}),  $\vartheta_{i}=x_{i 2}+\gamma (x_{i 1}- y_{i}^{r} )$ with $ \gamma \geq \frac{3}{2} $ as in (\ref{dot_V_ix_1}), the pair $ (M_{i}, N_{i}) $ is selected as in (\ref{internal_model}), and $ \beta_{1}, \beta_{2} >0 $ are sufficiently large constants satisfying (\ref{paremeters_beta_i}).
\end{theorem}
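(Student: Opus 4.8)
The plan is to invoke Lemma~\ref{lemma_stabilization}, which reduces the optimal output consensus Problem~\ref{problem} to the reference-tracking Problem~\ref{problem_2}: it suffices to show that, under the controller (\ref{controller_1}), every closed-loop trajectory is bounded on $[0,\infty)$ and the error state $\bar{x}_{i}$ tends to the origin. The whole argument is built on the Lyapunov function $V_{i2}$ in (\ref{V_i2_1}) together with the dissipation inequality (\ref{dot_V_i2_4}), namely
\begin{equation*}
\dot{V}_{i2} \leq -\chi_{i}^{2} - \vartheta_{i}^{2} + \tilde{\gamma}_{ir}|\tilde{y}_{i}^{r}|^{2} + \hat{\gamma}_{ir}|\dot{y}_{i}^{r}|^{2},
\end{equation*}
in which the two remaining sign-indefinite terms are driven entirely by the coordinator signals $\tilde{y}_{i}^{r}=y_{i}^{r}-s^{\star}$ and $\dot{y}_{i}^{r}$. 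The crucial input is Theorem~\ref{proposition1}, which guarantees that $y_{i}^{r}$ is bounded and converges exponentially to $s^{\star}$; a direct consequence (using that $\xi_{i}^{i}$ is bounded away from zero, $\nabla c_{i}$ is bounded along the trajectory, and $z_{i}$ is bounded) is that $\dot{y}_{i}^{r}$ is bounded and also decays exponentially, so that both $|\tilde{y}_{i}^{r}|^{2}$ and $|\dot{y}_{i}^{r}|^{2}$ are integrable over $[0,\infty)$.

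Second, I would establish boundedness and the absence of finite escape time. On any interval of existence, integrating the displayed inequality gives
\begin{equation*}
V_{i2}(t) \leq V_{i2}(0) + \int_{0}^{\infty}\big(\tilde{\gamma}_{ir}|\tilde{y}_{i}^{r}|^{2} + \hat{\gamma}_{ir}|\dot{y}_{i}^{r}|^{2}\big)\,\mathrm{d}s < \infty,
\end{equation*}
so that $V_{i2}$ stays uniformly bounded. Since $V_{i2}$ is radially unbounded in $(\chi_{i},\vartheta_{i},\tilde{k}_{i},\tilde{\Psi}_{i})$, this bounds $\chi_{i}$, $\vartheta_{i}$, $\tilde{k}_{i}$ and $\tilde{\Psi}_{i}$, and hence the adaptive gains $k_{i}$, $\bar{\Psi}_{i}$ and the internal-model state $\eta_{i}$. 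Because this bound does not depend on the maximal existence time, no trajectory can escape in finite time even though $f_{i}$ is only locally Lipschitz, and every solution extends to all $t\geq 0$.

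Third, I would extract convergence by a Barbalat-type argument. The same integration yields $\int_{0}^{\infty}(\chi_{i}^{2}+\vartheta_{i}^{2})\,\mathrm{d}s<\infty$, so $\chi_{i},\vartheta_{i}$ are square integrable. With all closed-loop states now known to be bounded, the right-hand sides of (\ref{closed-loop system 1}) are bounded, hence $\dot{\chi}_{i}$ and $\dot{\vartheta}_{i}$ are bounded and $\chi_{i}^{2}+\vartheta_{i}^{2}$ is uniformly continuous; Barbalat's lemma then forces $\chi_{i}\to 0$ and $\vartheta_{i}\to 0$. Recalling $\chi_{i}=\operatorname{col}(\bar{x}_{i1},\tilde{\eta}_{i})$ and $\vartheta_{i}=\bar{x}_{i2}+\gamma\bar{x}_{i1}$, this gives $\bar{x}_{i1}\to 0$ and then $\bar{x}_{i2}\to 0$, i.e.\ $\bar{x}_{i}\to 0$, so Problem~\ref{problem_2} is solved. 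Lemma~\ref{lemma_stabilization} then closes the argument: $y_{i}=\bar{x}_{i1}+y_{i}^{r}\to s^{\star}$ for every $i$, which is precisely the claim of Problem~\ref{problem}.

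The main obstacle I anticipate is the adaptive part of the controller. Because the dissipation inequality (\ref{dot_V_i2_4}) carries no negative term in $\tilde{k}_{i}$ or $\tilde{\Psi}_{i}$, the origin of the full closed loop is not shown to be exponentially stable, and the clean vanishing-perturbation result of Lemma~\ref{lemma 4} cannot be applied directly to the augmented state. The argument must therefore be split into a boundedness stage (obtained from the mere integrability of the exponentially vanishing coordinator terms) followed by a separate square-integrability-plus-uniform-continuity stage for $(\chi_{i},\vartheta_{i})$. The delicate points are verifying that $\dot{y}_{i}^{r}$ genuinely decays exponentially, so the perturbation terms are integrable rather than merely bounded, and confirming that boundedness of the whole state --- including the gain errors that need not converge to zero --- indeed renders $\dot{\chi}_{i},\dot{\vartheta}_{i}$ bounded, which is what legitimizes the final Barbalat step.
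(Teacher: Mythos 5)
Your proposal is correct, and it follows the paper up to and including the dissipation inequality (\ref{dot_V_i2_4}); the divergence is in how the coordinator-induced terms $\tilde{\gamma}_{ir}|\tilde{y}_{i}^{r}|^{2}+\hat{\gamma}_{ir}|\dot{y}_{i}^{r}|^{2}$ are absorbed. The paper invokes the converse Lyapunov theorem (Theorem 4.14 in \cite{khalil2002nonlinear}) on the exponentially stable coordinator subsystem to obtain a function $\bar{V}_{0}$ with $\dot{\bar{V}}_{0}\leq -\bar{c}_{3}(\|\tilde{y}^{r}\|^{2}+\|\tilde{z}\|^{2})$, bounds $\|\dot{y}^{r}\|$ by a Lipschitz estimate in $\operatorname{col}(\tilde{y}^{r},\tilde{z})$, and forms the composite function $V=\sum_{i}V_{i2}+\mu\bar{V}_{0}$ with $\mu$ large enough to dominate the cross terms, arriving at $\dot{V}\leq -\chi^{2}-\vartheta^{2}$. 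You instead exploit the exponential decay of $\tilde{y}_{i}^{r}$ and $\dot{y}_{i}^{r}$ to conclude that the perturbation terms are integrable, integrate $\dot V_{i2}$ to get uniform boundedness of $V_{i2}$ (hence of $\chi_{i},\vartheta_{i},\tilde{k}_{i},\tilde{\Psi}_{i}$ and no finite escape time), and then run a square-integrability-plus-Barbalat argument. Both routes are sound; yours trades the converse-Lyapunov construction for the need to verify that $\dot{y}_{i}^{r}$ itself decays exponentially (which does hold, since $\dot{y}^{r}=-\Xi^{-1}\nabla\tilde{c}(y^{r})-\beta_{1}\mathcal{L}y^{r}-\beta_{2}z$ differs from its equilibrium value by terms proportional to $\tilde{y}^{r}$, $\tilde{z}$ and $R^{-1}-\Xi^{-1}$, all exponentially vanishing). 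A genuine advantage of your version is that it makes explicit two steps the paper leaves implicit: the ruling out of finite escape time for the only locally Lipschitz $f_{i}$, and the fact that $\dot{V}\leq-\chi^{2}-\vartheta^{2}$ is only negative semidefinite in the full state (the adaptive errors $\tilde{k}_{i},\tilde{\Psi}_{i}$ carry no negative term), so that the final convergence of $(\chi_{i},\vartheta_{i})$ requires an invariance or Barbalat-type argument rather than a direct asymptotic-stability conclusion.
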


\begin{proof}
	By Lemma \ref{lemma_stabilization} and the controller design procedure, it is sufficient to prove that the distributed dynamic controller (\ref{controller_1_1}) and (\ref{controller_1_2}) solves Problem \ref{problem_2}, that is, stabilizes the augmented system composed of the error system (\ref{error_system}) and the optimal coordinator (\ref{controller_1_3}). By substituting the distributed dynamic controller (\ref{controller_1}) into the agent dynamics (\ref{dynamics}), one is able to obtain the following closed-loop system,
	\begin{subequations}\label{closed-loop system_1}
		\begin{align} 
		\begin{split} \label{closed-loop system_1_1}
		\dot{\bar{x}}_{i 1}=&-\gamma \bar{x}_{i 1}+\vartheta_{i}-\dot{y}_{i}^{r}, \\
		\dot{\tilde{\eta}}_{i}=&M_{i} \tilde{\eta}_{i}+\tilde{f}_{i 1}-b_{i}^{-1}(w) N_{i}\epsilon_{i}, \\
		\dot{\vartheta}_{i}=&\tilde{f}_{i 2}\!+\!b_{i} \big(\bar{\Psi}_{i}\!-\!\Psi_{i}(\sigma)\big) \eta_{i} \!-\!b_{i} k_{i} \rho_{i}\left(\vartheta_{i}\right) \vartheta_{i} +\epsilon_{i},\\
		\dot{k}_{i} =&\rho_{i}\left(\vartheta_{i}\right) \vartheta_{i}^{2},\quad
		\dot{\bar{\Psi}}_{i} =-\eta_{i}^{\operatorname{T}}\vartheta_{i}, 
		\end{split}~~~~\!\! \biggggr\}\\[1mm]
		\begin{split} \label{closed-loop system_1_2}
		\dot{y}_{i}^{r} = &-\frac{1}{\xi_{i}^{i}} \nabla c_{i}\left(y_{i}^{r}\right)-\beta_{1} \sum_{i=1}^{N} a_{i j}\left(y_{i}^{r}-y_{j}^{r}\right)-\beta_{2} z_{i},  \\[-2mm]
		\dot{z}_{i} =&\beta_{1} \sum_{i=1}^{N} a_{i j}\left(y_{i}^{r}-y_{j}^{r}\right), \quad z_{i}(0)=0,\\[-1mm]
		\dot{\xi}_{i} =& -\sum_{j=1}^{N}a_{ij}(\xi_{i}-\xi_{j}).
		\end{split}~\!\Biggggr\}
		\end{align}
	\end{subequations}
	
	We are now ready to prove the asymptotical stability of the closed-loop system (\ref{closed-loop system_1}) at $ (\bar{x}_{i 1}, \vartheta_{i})=(0, 0) $. To this end, reconsider the Lyapunov function candidate $ V_{i2} $ in (\ref{V_i2_1}). Note that the subsystem (\ref{closed-loop system_1_1}) is the same as (\ref{closed-loop system 1}). By referring to (\ref{dot_V_i2_4}), one thus has
	\begin{align} \label{dot_V_i2_5}
	\dot{V}_{i 2}|_{(\ref{closed-loop system_1})} = \dot{V}_{i 2}|_{(\ref{closed-loop system 1})} \leq &-\!\chi_{i}^{2} - \vartheta_{i}^{2} +\tilde{\gamma}_{i r}|\tilde{y}_{i}^{r}|^{2} +\hat{\gamma}_{i r}\left|\dot{y}_{i}^{r}\right|^{2}.
	\end{align}

	It is shown in the proof of Theorem \ref{proposition1} that the origin $ (\tilde{y}^{r},\tilde{z})=(0,0) $ is an exponentially stable equilibrium point of the system (\ref{algorithm_r}). By applying Theorem 4.14 in \cite{khalil2002nonlinear}, there exists a Lyapunov function $ \bar{V}_{0}(\tilde{y}^{r},\tilde{z}) $ that satisfies the following inequalities
	\begin{align}
		\bar{c}_{1}\big(\left\|\tilde{y}^{r}\right\|^{2}+\|\tilde{z}\|^{2}\big) \leq& \bar{V}_{0} \leq \bar{c}_{2}\big(\left\|\tilde{y}^{r}\right\|^{2}+\|\tilde{z}\|^{2}\big), \\
		\dot{\bar{V}}_{0}|_{(\ref{algorithm_r})} \leq&-\bar{c}_{3}\big(\left\|\tilde{y}^{r}\right\|^{2}+\|\tilde{z}\|^{2}\big), 
	\end{align}
	for some positive constants $ \bar{c}_{1} $, $ \bar{c}_{2} $ and $ \bar{c}_{3} $.
	Similarly, since (\ref{closed-loop system_1_2}) coincides with (\ref{algorithm_r}), by adopting the same Lyapunov function candidate $ \bar{V}_{0}(\tilde{y}^{r},\tilde{z}) $, one has 
	\begin{equation} \label{dot_V_0_final_1}
	\dot{\bar{V}}_{0}|_{(\ref{closed-loop system_1})} = \dot{\bar{V}}_{0}|_{(\ref{algorithm_r})} \leq -\bar{c}_{3}\big(\left\|\tilde{y}^{r}\right\|^{2}+\|\tilde{z}\|^{2}\big).
	\end{equation}
	
	Now, consider the Lyapunov function candidate $ V=\sum_{i=1}^{N}V_{i2} + \mu \bar{V}_{0} $, where $ \mu > 0 $ is a constant to be determined later. Note that $ \chi = \operatorname{col}(\chi_{1},\chi_{2},\ldots,\chi_{N}) $ and $ \vartheta = \operatorname{col}(\vartheta_{1},\vartheta_{2},\ldots,\vartheta_{N}) $. By combining (\ref{dot_V_i2_5}) and (\ref{dot_V_0_final_1}), the derivative of $ V $ along the trajectories of the closed-loop system (\ref{closed-loop system_1}) satisfies
	\begin{align} \label{dot_V_1}
	\dot{V} \leq &-\!\chi^{2} - \vartheta^{2} +\sum_{i=1}^{N}\tilde{\gamma}_{i r}|\tilde{y}_{i}^{r}|^{2} +\sum_{i=1}^{N}\hat{\gamma}_{i r}\left|\dot{y}_{i}^{r}\right|^{2} \notag\\
	&- \mu \bar{c}_{3}\big(\left\|\tilde{y}^{r}\right\|^{2}+\|\tilde{z}\|^{2}\big).
	\end{align}
	
	Under Assumption \ref{assumption_cost functions}, it is worth noting that the function $ \nabla \tilde{c}\left(y^{r}\right) -\nabla \tilde{c}\left(\bar{y}^{r}\right) $ is Lipschitz with respect to $ \tilde{y}^{r}= y^{r}-\bar{y}^{r} $. Thus, it follows from (\ref{compact form_r_a}) and (\ref{equilibrium point_r_a}) that $ \dot{y}^{r} $ is Lipschitz in $ \operatorname{col}(\tilde{y}^{r}, \tilde{z}) $. Thus, there exists a constant $ \mu_{1}>0 $ such that
	\begin{align} \label{hat_gamma_ir}
	\sum_{i=1}^{N}\hat{\gamma}_{i r}\left|\dot{y}_{i}^{r}\right|^{2} \leq \hat{\gamma}_{r}\|\dot{y}^{r}\|^{2} \leq \mu_{1}\bar{c}_{3}\big(\left\|\tilde{y}^{r}\right\|^{2}+\|\tilde{z}\|^{2}\big),
	\end{align}
	where $ \hat{\gamma}_{r} = \max \big\{ \hat{\gamma}_{i r}, i=1,2,\ldots,N \big\}$. Meanwhile, it holds that
	\begin{align} \label{tilde_gamma_ir}
	\sum_{i=1}^{N}\tilde{\gamma}_{i r}|\tilde{y}_{i}^{r}|^{2} \leq \tilde{\gamma}_{r}\|\tilde{y}^{r}\|^{2} \leq \tilde{\gamma}_{r}\big(\left\|\tilde{y}^{r}\right\|^{2}+\|\tilde{z}\|^{2}\big),
	\end{align}
	where $ \tilde{\gamma}_{r} = \max \big\{ \tilde{\gamma}_{i r}, i=1,2,\ldots,N \big\}$. Let $ \mu\geq \mu_{1} + \frac{\tilde{\gamma}_{r}}{\bar{c}_{3}} $. Then, substituting (\ref{hat_gamma_ir}) and (\ref{tilde_gamma_ir}) into (\ref{dot_V_1}) yields
	\begin{align*} \label{dot_V_2}
	\dot{V} \leq &- \chi^{2} - \vartheta^{2}.
	\end{align*}
	This indicates that the origin is an asymptotically stable equilibrium point of the closed-loop system (\ref{closed-loop system_1_1}). Therefore, the reference-tracking problem, that is, Problem \ref{problem_2} for the error system (\ref{error_system}) is solved. One concludes that the optimal output consensus problem for the second-order uncertain nonlinear multi-agent system (\ref{dynamics}) over a weight-unbalanced directed network, that is, Problem \ref{problem}, is solved. The proof is thus completed.
\end{proof}

\begin{remark} \label{remark_controller design}
	The distributed dynamic state feedback controller (\ref{controller_1}) consists of three components, namely, the distributed optimal coordinator (\ref{controller_1_3}), the internal model (\ref{controller_1_2}) and the decentralized adaptive controller (\ref{controller_1_1}).
\end{remark}

\begin{remark} 
	It is worth pointing out that the controllers developed in \cite{li2021distributed,li2018consensus,li2020distributed} take linear combination of information between neighbors with constant control gains. Thus, the nonlinear functions considered in the above-mentioned works are required to be globally Lipschitz so that the positive terms resulting from the nonlinear functions can be dominated by choosing sufficiently large control gains. On the contrary, the controller (\ref{controller_1}) proposed in this work is of a nonlinear form, which is able to deal with functional coefficients of positive terms. Therefore, by utilizing Lemma 11.1 in \cite{chen2015stabilization}, the restrictive globally Lipschitz conditions on nonlinear agent dynamics can be avoided.
\end{remark}
 
In what follows, we study the conditions under which the estimated parameter vectors $ \bar{\Psi}_{i}(t), i=1,2,\ldots,N $ converge to the theoretical parameter vectors $ \Psi_{i}(\sigma) $. Under Assumption \ref{assumption_u}, it is shown in \cite{liu2009parameter} that there exists an integer $ s_{i} $ such that, along all the trajectories $ v(t) $ and all $ \operatorname{col}(w,\sigma)\in \mathbb{W}\times\mathbb{S} $,
\begin{equation}
	u_{i}^{\star}(s^{\star},v, w, \sigma)=\sum_{l=1}^{s_{i}} C_{i l}\left(s^{\star}, v_{0}, w, \sigma\right) e^{j\hat{\omega}_{i l} t},
\end{equation}
where $ j=\sqrt{-1} $,~ $ \hat{\omega}_{i 1},\hat{\omega}_{i 2}, \ldots, \hat{\omega}_{i s_{i}}$ are distinct and determined by the eigenvalues of $S(\sigma)$, and $C_{i l}\left(s^{\star}, v_{0}, w, \sigma\right) \in \mathbb{C}$ are not identically zero for all $\operatorname{col}\left(v_{0}, w, \sigma\right)$. We have the following result.

\begin{theorem} \label{theorem Psi}
	Under Assumptions 1-5, suppose that the internal model (\ref{internal_model}) is of minimal order, and none of $C_{i l}\left(s^{\star}, v_{0}, w, \sigma\right)$ is identically zero for all $ v(t) $ and all $ \operatorname{col}(w,\sigma)\in\mathbb{W}\times\mathbb{S} $. Then under the distributed controller $(\ref{controller_1})$, one has $\lim _{t \rightarrow \infty}\bar{\Psi}_{i}(t)=\Psi_{i}(\sigma), ~i=1,2, \ldots, N$.
\end{theorem}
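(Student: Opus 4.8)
\emph{Approach.} The plan is to read the convergence $\bar{\Psi}_{i}\to\Psi_{i}(\sigma)$ as a parameter-convergence problem and to combine the asymptotic stability already established in Theorem~\ref{theorem 2} with a persistency-of-excitation (PE) argument applied to the internal-model state $\eta_{i}$.

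First I would harvest the consequences of Theorem~\ref{theorem 2}. Since the closed-loop system (\ref{closed-loop system_1}) is asymptotically stable at $(\bar{x}_{i1},\vartheta_{i})=(0,0)$ with $\dot{V}\leq-\chi^{2}-\vartheta^{2}$, one has $\chi_{i}\to0$ and $\vartheta_{i}\to0$ as $t\to\infty$, and integrating the Lyapunov inequality shows $\vartheta_{i}$ is square integrable. From $\dot{k}_{i}=\rho_{i}(\vartheta_{i})\vartheta_{i}^{2}\geq0$ together with boundedness of $V_{i2}$, the gain $k_{i}(t)$ is nondecreasing and bounded, hence converges to a constant $k_{i\infty}$. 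Recalling $\tilde{\eta}_{i}=\eta_{i}-\theta_{i}(s^{\star},d)-b_{i}^{-1}(w)N_{i}\vartheta_{i}$ and $\chi_{i}=\operatorname{col}(\bar{x}_{i1},\tilde{\eta}_{i})$, the limits $\chi_{i}\to0$ and $\vartheta_{i}\to0$ give $\eta_{i}(t)-\theta_{i}(s^{\star},d)\to0$; that is, the internal-model state asymptotically reproduces $\theta_{i}=T_{i}(\sigma)\tau_{i}(s^{\star},d)$.

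Next I would isolate the error subsystem governing $\tilde{\Psi}_{i}=\bar{\Psi}_{i}-\Psi_{i}(\sigma)$. Writing $\psi_{i}=\tilde{\Psi}_{i}^{\operatorname{T}}$ and using (\ref{closed-loop system_1_1}), the pair $(\vartheta_{i},\psi_{i})$ obeys
\begin{align*}
\dot{\vartheta}_{i}&=b_{i}\eta_{i}^{\operatorname{T}}\psi_{i}-b_{i}k_{i}\rho_{i}(\vartheta_{i})\vartheta_{i}+\tilde{f}_{i2}+\epsilon_{i},\\
\dot{\psi}_{i}&=-\eta_{i}\vartheta_{i}.
\end{align*}
Substituting $k_{i}\to k_{i\infty}$ and $\rho_{i}(\vartheta_{i})\to\rho_{i}(0)$, and noting that $\tilde{f}_{i2}$ and $\epsilon_{i}$ vanish as $\chi_{i},\vartheta_{i},\tilde{y}_{i}^{r},\dot{y}_{i}^{r}\to0$, reduces this to an asymptotically autonomous gradient-type adaptive error system whose regressor is $\eta_{i}$. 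The crux is then the excitation of this regressor.

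The main obstacle is to establish that $\eta_{i}$ is persistently exciting. By (\ref{dot_tau}) and the spectral expansion $u_{i}^{\star}=\sum_{l=1}^{s_{i}}C_{il}e^{j\hat{\omega}_{il}t}$, the vector $\tau_{i}=\operatorname{col}\big(u_{i}^{\star},\dot{u}_{i}^{\star},\ldots,u_{i}^{\star(s_{i}-1)}\big)$ is a sum of $s_{i}$ harmonics at the distinct frequencies $\hat{\omega}_{il}$, whose associated Vandermonde direction vectors are linearly independent; since $T_{i}(\sigma)$ is nonsingular, $\theta_{i}=T_{i}(\sigma)\tau_{i}$ shares this structure, and the coefficients are nonvanishing precisely under the stated hypotheses (minimal order of (\ref{internal_model}) and $C_{il}\not\equiv0$). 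A signal comprising $s_{i}$ distinct spectral lines with linearly independent coefficient vectors is PE in $\mathbb{R}^{s_{i}}$, the minimal-order assumption guaranteeing that the harmonic content has full dimension $s_{i}$; hence $\theta_{i}$ is PE, and $\eta_{i}$ inherits this property since $\eta_{i}-\theta_{i}\to0$. I expect this step---proving PE of $\theta_{i}$ from minimality and the nonvanishing coefficients, then transferring it to $\eta_{i}$ despite the asymptotically vanishing mismatch---to be the technical heart of the argument. With $\eta_{i}$ PE and the error subsystem in standard adaptive form, I would finally invoke a PE-based convergence result of the type used in \cite{liu2009parameter} to conclude that $(\vartheta_{i},\psi_{i})\to(0,0)$, so that $\tilde{\Psi}_{i}\to0$, i.e. $\lim_{t\to\infty}\bar{\Psi}_{i}(t)=\Psi_{i}(\sigma)$ for each $i=1,2,\ldots,N$.
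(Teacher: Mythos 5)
Your proposal is correct and follows essentially the same route the paper intends: the paper omits the proof by deferring to Theorem 2 of \cite{su2013cooperative}, whose argument is precisely the one you reconstruct --- use the closed-loop convergence of $(\chi_i,\vartheta_i)$ to show $\eta_i$ asymptotically tracks $\theta_i=T_i(\sigma)\tau_i$, establish persistency of excitation of $\theta_i$ from the $s_i$ distinct spectral lines with nonvanishing coefficients $C_{il}$ (the Vandermonde/minimal-order step), and conclude parameter convergence of the gradient-type adaptation law. No substantive gap.
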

\begin{proof}
	The proof follows similar arguments as those in the proof of Theorem 2 in \cite{su2013cooperative}, and is thus omitted.
\end{proof}

\section{Illustrative Examples} \label{section simulation results}

In this section, two simulation examples are provided to illustrate the effectiveness of the proposed distributed adaptive controller. 

\subsection{Example 1}
Consider a collection of five agents whose dynamics are described by the following second-order uncertain nonlinear systems,
\begin{equation} \label{Van der Pol}
	\begin{aligned}
		\dot{x}_{i 1}\!= & x_{i 2}, \\
		\dot{x}_{i 2}\!= & \!-\!x_{i 1}x_{i 2}\!+\!\mu_{i 1}(w) x_{i 2}\left(1\!-\!x_{i 1}^{2}\right)\!+\!b_{i}(w) u_{i} \!+\! A_{w}\sin(\sigma t), \\
		y_{i}\!= & x_{i 1}, \quad i=1,2,\ldots,5,
	\end{aligned}
\end{equation}
where $ A_{w}\sin(\sigma t) $ is an external disturbance with $ A_{w} = \mu_{i 2}(w)A  $ being its uncertain amplitude and $ \sigma $ being its angular frequency. It can be verified that $ A_{w}\sin(\sigma t)= A_{w}v_{1}$ with $ v $ being generated by the exosystem (\ref{exosystem}) with $S(\sigma)=\Big[\begin{array}{cc}0 & \sigma \\ -\sigma & 0\end{array}\Big]$. Thus, it is known that Assumption \ref{assumption_exosysem} is satisfied. Assume that the uncertain coefficients $ \mu_{i} = \big(\mu_{i 1}(w), \mu_{i 2}(w), b_{i}(w)\big) $ satisfy $ \mu_{i} = \bar{\mu}_{i} + w_{i} $, where $ \bar{\mu}_{i}= (\bar{\mu}_{i 1}, \bar{\mu}_{i 2}, \bar{b}_{i}) $ denotes the nominal value of $ \mu_{i} $, $ w_{i} = (w_{i1}, w_{i2}, w_{i3}) \in \mathbb{W} $ denotes the uncertainty with $ \mathbb{W} $ being an unknown compact set. It is worth pointing out that the nonlinear function on the right hand side of (\ref{Van der Pol}) is not required to be globally Lipschitz.

The unbalanced directed communication topology among agents is described in Fig. \ref{Fig_topology1}. It can be verified that the directed graph is strongly connected. For $ i=1,2,\ldots,5 $, suppose that each agent $ i $ possesses a local cost function $ c_{i}(s)=0.1(s-i)^{2} $. It can be verified that all local cost functions are strongly convex, and the global minimizer is $ s^{\star}=3 $. Therefore, Assumptions \ref{assumption_cost functions} and \ref{graph assumption} are fulfilled.

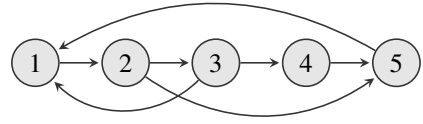
\begin{figure}[!t] 
	\centering 
	\begin{tikzpicture}[> = stealth, 
	shorten > = 1pt, 
	auto,
	node distance = 3cm, 
	semithick 
	,scale=0.6,auto=left,every node/.style={circle,fill=gray!20,draw=black!80,text centered}]
	\centering
	\node (n1) at (0,0)		{1};
	\node (n2) at (2,0)  	{2};
	\node (n3) at (4,0) 	{3};
	\node (n4) at (6,0) 	{4};
	\node (n5) at (8,0) 	{5};
	
	\draw[->,black!80] (n3) to [out=-135,in=-45] (n1);
	\draw[->,black!80] (n1)-- (n2);
	\draw[->,black!80] (n2)-- (n3);
	\draw[->,black!80] (n3)-- (n4);
	\draw[->,black!80] (n4)-- (n5);
	\draw[->,black!80] (n2) to [out=-35,in=-145] (n5);
	\draw[->,black!80] (n5) to [out=150,in=30] (n1);
	
	\end{tikzpicture} 
	\caption{Weight-unbalanced directed network.} 
	\label{Fig_topology1}
\end{figure}

For $ i=1,2,\ldots,5 $, it is calculated that $u_{i}^{\star}\left(s^{\star}, d\right)=-b_{i}(w)^{-1} A_{w} v_{1}$. Thus, Assumption \ref{assumption_u} is also satisfied. By applying Theorem \ref{theorem 2}, the optimal output consensus problem for uncertain system (\ref{Van der Pol}) over the unbalanced directed network depicted in Fig. \ref{Fig_topology1} is solvable. To proceed, it can be deduced that  $\frac{\mathrm{d}^{2} u_{i}^{\star}\left(s^{\star}, d\right)}{\mathrm{d} t^{2}}=-\sigma^{2} u_{i}^{\star}\left(s^{\star}, d\right)$. It then follows from (\ref{Phi_Gamma}) that $\Phi_{i}(\sigma)=\Big[\begin{array}{cc}0 & 1 \\ -\sigma^{2} & 0\end{array}\Big], ~ \Gamma_{i}=[\begin{array}{ll}1 & 0\end{array}]$. Let $ (M_{i}, N_{i}) $, $ i=1,2,\ldots,5 $ be any controllable pairs of the form $M_{i}=\Big[\begin{array}{cc}0 & 1 \\ -\omega_{1} & -\omega_{2}\end{array}\Big], ~ N_{i}=[\begin{array}{ll}0 & 1\end{array}]^{\operatorname{T}}$, where $ \omega_{1}, \omega_{2} > 0 $ are two constants to be specified later. With these choices, it is noted that the internal model (\ref{internal_model}) is of minimal order. By solving the Sylvester equation (\ref{Sylvester_equation}), one further obtains that
\begin{align*}
	T^{-1}(\sigma)=&\left[\begin{array}{cc}
		\omega_{1}-\sigma^{2} & \omega_{2} \\
		-\omega_{2} \sigma^{2} & \omega_{1}-\sigma^{2}
	\end{array}\right], \\
	\Psi_{i}(\sigma)=&\Gamma_{i} T^{-1}(\sigma)=\left[\omega_{1}-\sigma^{2} \quad \omega_{2}\right].
\end{align*} 

In the simulation, set $ A=10 $ and $ \sigma=0.8 $. For $ i=1,2,\ldots,5 $, let $ \bar{\mu}_{i 1}=\bar{\mu}_{i 2} = i $, $\bar{b}_{i}=1$, and $ w_{i} = (w_{i1}, w_{i2}, w_{i3}) $ be randomly generated such that $ \mu_{i 1} $ and $ b_i $ are positive. Besides, choose $ \omega_{1}=2 $ and $ \omega_{2}=3 $. Then, it can be calculated that $ \Psi_{i}|_{\sigma=0.8}=[1.36, ~ 3] $. The other initial conditions are randomly chosen. Then applying the proposed distributed adaptive controller (\ref{controller_1}) to the second-order uncertain nonlinear systems (\ref{Van der Pol}), with $ \rho_{i}(\vartheta_{i})= \vartheta_{i}^{4}+1 $ and $ \vartheta_{i}=2(x_{i1}-y_{i}^{r})+x_{i2} $. The simulation results are presented in Fig. \ref{Fig_second_order_example1_1}.

\begin{figure}[t] 
	\begin{center}
		\includegraphics[height=10.8cm, width=8.8cm]{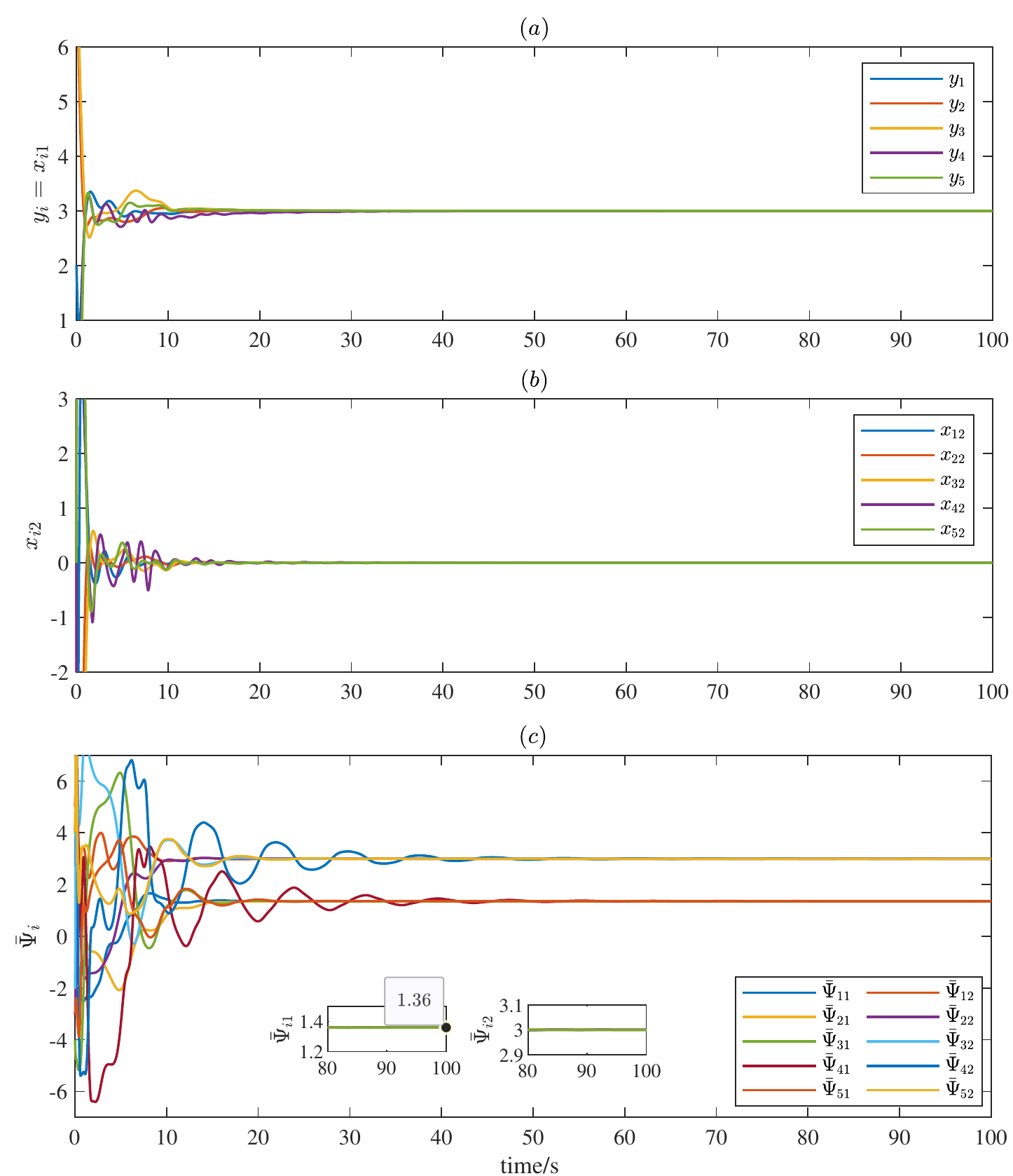} 
		\caption{Convergence performances of the second-order uncertain nonlinear systems (\ref{Van der Pol}) under the distributed dynamic controller (\ref{controller_1}). (a) Trajectories of agent outputs $ y_{i} $'s; (b) Trajectories of agent states $ x_{i2} $'s; (c) Trajectories of adaptive gain vectors $ \bar{\Psi}_{i}=[\bar{\Psi}_{i1},~\bar{\Psi}_{i2}] $'s.}  
		\label{Fig_second_order_example1_1} 
	\end{center}
\end{figure}

Fig. \ref{Fig_second_order_example1_1}(a) and (b) present the convergence performances of agent outputs $ y_{i} $ and corresponding states $ x_{i2} $, $ i=1,2,\ldots,5 $, respectively. It is observed from Fig. \ref{Fig_second_order_example1_1}(a) that the outputs of all the agents eventually approach the optimal value $ s^{\star}=3 $. Meanwhile, it can be observed from Fig. \ref{Fig_second_order_example1_1}(b) that the trajectories of agent states $ x_{i2}, i=1,2,\ldots,5 $ converge to the origin. The convergence performances of the adaptive gain vectors $ \bar{\Psi}_{i} $ are presented in \ref{Fig_second_order_example1_1}(c). It is seen that $ \bar{\Psi}_{i1} $ and $ \bar{\Psi}_{i2} $ tend to 1.36 and 3 respectively, which is consistent with the theoretical result in Theorem \ref{theorem Psi}.

\begin{figure}[t] 
	\centering 
	\includegraphics[height=7.6cm, width=8.8cm]{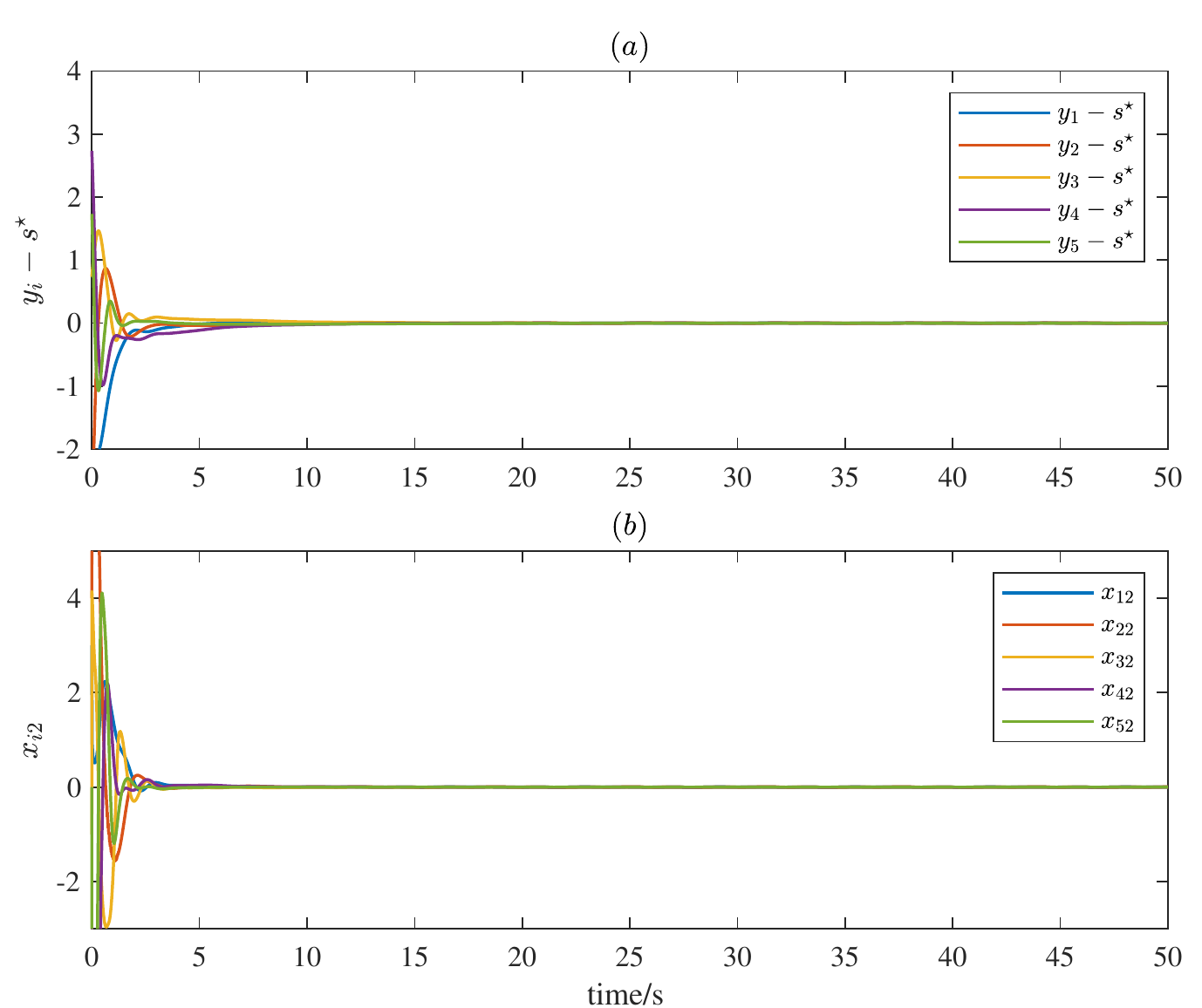} 
	\caption{Convergence performances of the uncertain nonlinear damping-spring system (\ref{nonlinear damping type}) under the distributed dynamic controller (\ref{controller_1}). (a) Trajectories of agent outputs $ y_{i} $'s; (b) Trajectories of agent states $ x_{i2} $'s.}
	\label{Fig_second_order_example2_1}
\end{figure}

\begin{figure}[t] 
	\centering 
	\includegraphics[height=7.6cm, width=8.95cm]{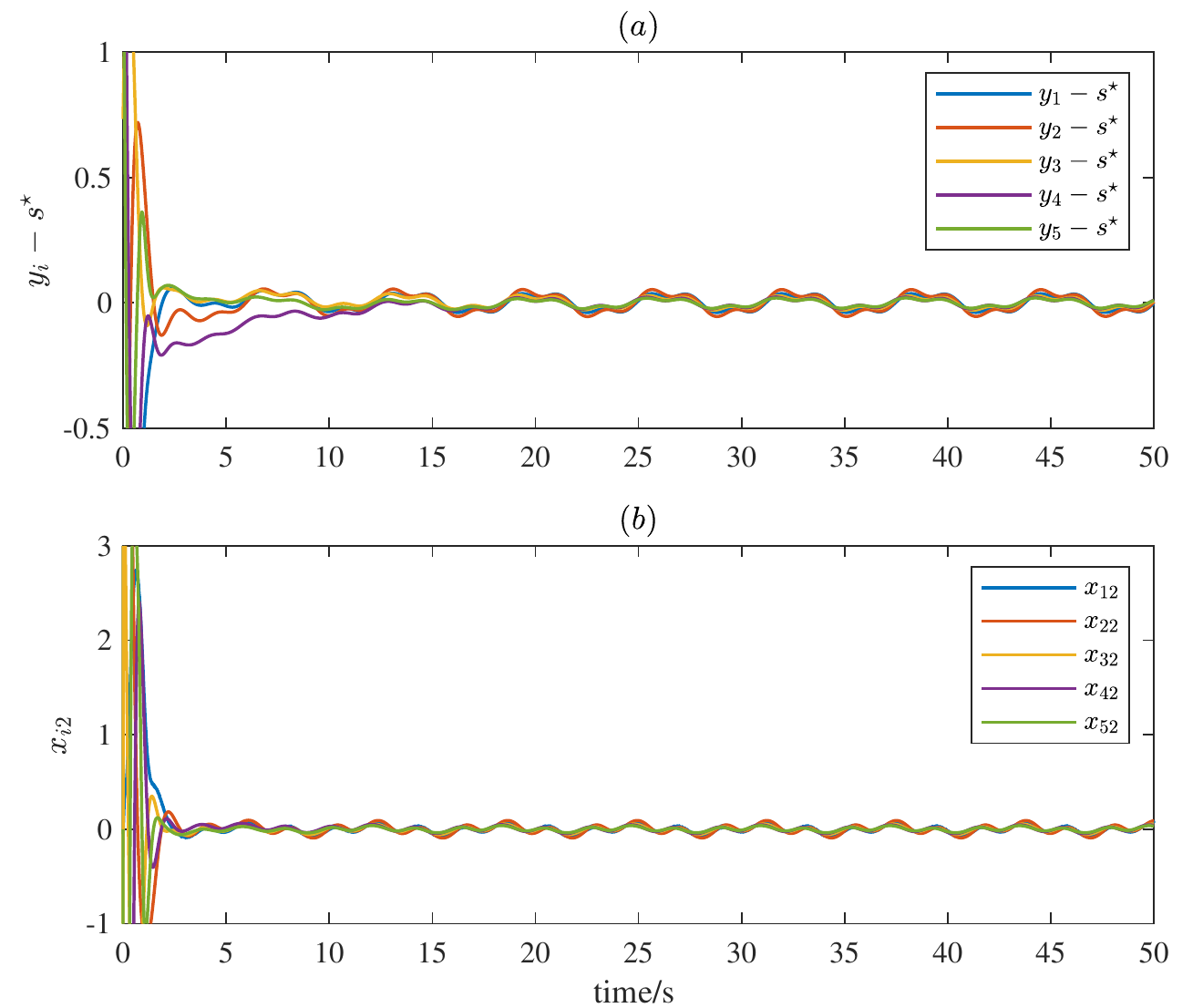} 
	\caption{Convergence performances of the uncertain nonlinear damping-spring system (\ref{nonlinear damping type}) under the distributed dynamic controller (\ref{controller_1}) without an internal model. (a) Trajectories of agent outputs $ y_{i} $'s; (b) Trajectories of agent states $ x_{i2} $'s.}
	\label{Fig_second_order_example2_2}
\end{figure}

\subsection{Example 2}

Consider a more practical scenario where the dynamics of agent $ i $, $ i=1,2,\ldots,5, $ are of the uncertain nonlinear damping-spring type \cite{zhang2009suppressing,wang2018nonlinear},
\begin{equation} \label{nonlinear damping type}
	m_{i} \ddot{y}_{i}+\kappa_{i1} y_{i}+\kappa_{i2} y_{i}^{3}+\mu_{i1} \dot{y}_{i}+\mu_{i2} \dot{y}_{i}^{3} +d_{i}(v,w)=u_{i}, 
\end{equation} 
where $y_{i}$ represents the position of agent $ i $, $\dot{y}_{i}$ represents its velocity, $u_{i}$ is its control, $ d_{i}(v,w)= A_{w}v_{2}(1-v_{1}^{2}) $ is the external disturbance with $ v $ being generated by the exosystem (\ref{exosystem}) with $S(\sigma)=\Big[\begin{array}{cc}0 & 1 \\ -1 & 0\end{array}\Big]$ and $ A_{w} $ being an uncertain constant. $m_{i}$ denotes the uncertain mass, $\kappa_{i1}, \kappa_{i2}$ are uncertain spring constants, and $\mu_{i1}, \mu_{i2}$ are uncertain damping constants.

Let $ x_{i1}=y_{i} $ and $ x_{i2}=\dot{y}_{i} $. Then (\ref{nonlinear damping type}) can be rewritten in the form of (\ref{dynamics}) with $ f_{i}(x_{i1},x_{i2},v,w)= -m_{i}^{-1}\big( \kappa_{i1} x_{i1}+\kappa_{i2} x_{i1}^{3}+ \mu_{i1} x_{i2}+\mu_{i2} x_{i2}^{3} + d_{i}(v,w) \big) $ and $ b_{i}(w)= -m_{i}^{-1} $. Reconsider the unbalanced directed communication topology depicted in Fig. \ref{Fig_topology1}. In this example, we consider more general local cost functions as follows,
\begin{align*}
	f_{1} & =0.25e^{-0.2s}+0.5e^{0.5s}, \quad f_{2}=0.5(s-2)^{2}+e^{0.1s}, \\
	f_{3} & =0.2s\ln(1+s^{2})+s^{2}, \quad f_{4}=0.4\frac{s}{\sqrt{1+s^{2}}} + 0.5s^{2} ,\\
	f_{5} & =0.6s^{2}(\ln(s^{2}+0.5)+1) + 0.3s^{2}/\sqrt{s^{2}+5}.
\end{align*}
It can be verified that all local cost functions are strongly convex. Therefore, Assumptions \ref{assumption_cost functions} and \ref{graph assumption} are satisfied. It is calculated that the global solution takes an approximate value $ s^{\star}=0.267 $.

In the simulation, let $ \big(m_{i},\kappa_{i1},\kappa_{i2},\mu_{i 1},\mu_{i 2}\big)=(1+0.1i,2+0.2i,3-0.1i,4-0.2i,5-0.3i),~ i=1,2,\ldots,5 $, $ A_{w}=100 $. Note that $ u_{i}^{\star}(s^{\star},d)=-\big(\kappa_{i1}s^{\star}+\kappa_{i2}s^{\star 3}+A_{w}v_{2}(1-v_{1}^{2})\big) $. Thus, Assumptions \ref{assumption_exosysem} and \ref{assumption_u} are fulfilled. Choose the controllable pairs $ (M_{i}, N_{i}) $, $ i=1,2,\ldots,5 $ as follows
\begin{equation*}
	M_{i}=\left[\begin{array}{c|c}
		0 & I_{3} \\
		\hline-10  & -18,-15 , -6
	\end{array}\right], \quad N_{i}=[0,0,0,1]^{\operatorname{T}}.
\end{equation*}  
Similarily, by Theorem \ref{theorem 2}, the distributed controller is of the form (\ref{controller_1}) with $ \rho_{i}(\vartheta_{i})= \vartheta_{i}^{4}+1 $ and $ \vartheta_{i}=2(x_{i1}-y_{i}^{r})+x_{i2} $. Set $ z_{i}(0)=0 $, while the other initial conditions are randomly chosen. The simulation results are presented in Fig. \ref{Fig_second_order_example2_1}.

It is observed from Fig. \ref{Fig_second_order_example2_1}(a) that $ y_{i}-s^{\star} $, $ i=1,2,\ldots,5 $ converge to the origin. In other words, the trajectories of agent outputs $ y_{i}, i=1,2,\ldots,5 $ eventualy reach the optimal solution $ s^{\star}=0.267 $. It can be observered from Fig. \ref{Fig_second_order_example2_1}(b) that all the states $ x_{i2} $, $ i=1,2,\ldots,5 $ tend to the origin before 50s. Therefore, it is shown that the distributed adaptive controller (\ref{controller_1}) can be applied to (\ref{nonlinear damping type}) to solve the distributed optimal output consensus problem over the unbalanced directed network.

The convergence performances in Fig. \ref{Fig_second_order_example2_1} show that the designed controller with an internal model (\ref{internal_model}) is effective in dealing with exogenous disturbances in agent dynamics. As a comparison, the convergence performances of the uncertain nonlinear damping-spring system (\ref{nonlinear damping type}) under the distributed dynamic controller (\ref{controller_1}) without an internal model is illustrated in Fig. \ref{Fig_second_order_example2_2}. It is observed from Fig. \ref{Fig_second_order_example2_2} that the distributed dynamic controller without an internal model can guarantee the agent outputs converge to the neighborhood of the optimal solution with a bounded error but cannot reach the exact value.

\section{Conclusion} \label{section conclusion}
This paper develops a new distributed adaptive controller to deal with the distributed optimal output consensus problem for second-order uncertain nonlinear multi-agent systems over unbalanced directed networks. The controller is designed based on a two-layer strategy and does not need any prior information of the dynamics uncertainties. It is shown that the proposed controller is able to steer all the agent outputs to the optimal solution which minimizes the global cost function. Two simulation examples are also provided to illustrate the effectiveness of the control scheme. Future work will concentrate on the distributed optimal output consensus problem over unbalanced directed networks for more general agent systems with only output information.


\begin{appendices}

\section{Proof of Theorem \ref{proposition1}} \label{Appendix_1}

Define $ y^{r}=\operatorname{col}(y_{1}^{r},y_{2}^{r},\ldots,y_{N}^{r}) $, $ z=\operatorname{col}(z_{1},z_{2},\ldots,z_{N}) $, $ \Xi^{-1}=\operatorname{diag}(\frac{1}{\xi_{1}^{1}},\frac{1}{\xi_{2}^{2}},\ldots,\frac{1}{\xi_{N}^{N}}) $, $ R^{-1}=\operatorname{diag}(\frac{1}{\varrho_{1}},\frac{1}{\varrho_{2}},\ldots,\frac{1}{\varrho_{N}}) $ and $ \nabla\tilde{c}(y^{r})$$=$$\operatorname{col}\big(\nabla c_{1}(y_{1}^{r}),\nabla c_{2}(y_{2}^{r}),\ldots,\nabla c_{N}(y_{N}^{r})\big) $. Note that $ \xi_{i}^{i}, i=1,2,\ldots,N $ are shown to be positive. It follows that the matrix $ \Xi^{-1} $ is well defined. Then, the optimal coordinator (\ref{algorithm_r}) can be rewritten in the following compact form,
\begin{subequations} \label{compact form_r}
	\begin{align}  
	\dot{y}^{r}= & -\Xi^{-1} \nabla \tilde{c}\left(y^{r}\right)-\beta_{1} \mathcal{L} y^{r}-\beta_{2} z, \label{compact form_r_a}\\
	\dot{z}= & \beta_{1} \mathcal{L} y^{r}	, \quad z(0)=0, \label{compact form_r_b}\\
	\dot{\xi} = & -\mathcal{L} \xi.
	\end{align}
\end{subequations}

To facilitate the analysis, define $ Y=\operatorname{col}(y^{r}, z) $. Then the dynamics of $ Y $ can be rewritten as
\begin{equation}
	\dot{Y}=\varphi(Y)+\psi(Y),
\end{equation}
where $ \varphi(Y) $ and $ \psi(Y) $ are respectively defined as follows,
\begin{align}
	\varphi(Y)&=\Bigg\{\begin{aligned}
		&-R^{-1} \nabla \tilde{c}\left(y^{r}\right)-\beta_{1} \mathcal{L} y^{r}-\beta_{2} z, \\
		&\beta_{1} \mathcal{L} y^{r}, \quad z(0)=0,
	\end{aligned}\\
	\psi(Y)&=\Bigg\{\begin{aligned}
		&(R^{-1}-\Xi^{-1}) \nabla \tilde{c}\left(y^{r}\right), \\
		&~~\mathbf{0}.
	\end{aligned}	
\end{align}

The rest of the proof can be accomplished by the following two steps.

\textbf{Step 1.} Establish the exponential stability of system $ \dot{Y}=\varphi(Y) $. Let $ \bar{Y}=\operatorname{col}(\bar{y}^{r}, \bar{z}) $ denote the equilibrium point of system $ \dot{Y}=\varphi(Y) $. In what follows, we first reveal the relationship between  $ \bar{y}^{r} $ and the optimal solution $ s^{\star} $. Note that the dynamics of $ \bar{Y} $ satisfy
\begin{subequations}\label{equilibrium point_r}
	\begin{align}
	\mathbf{0} = & -R^{-1}\nabla \tilde{c}(\bar{y}^{r}) - \beta_{1}\mathcal{L}\bar{y}^{r} - \beta_{2}\bar{z}, \label{equilibrium point_r_a}\\
	\mathbf{0} = & \beta_{1}\mathcal{L}\bar{y}^{r}.\label{equilibrium point_r_b}
	\end{align}
\end{subequations}

It follows from (\ref{equilibrium point_r_b}) that the vector $ \bar{y}^{r} $ resides in the null-space of $ \mathcal{L} $. Therefore, it can be claimed that $ \bar{y}^{r}=1_{N}\otimes \varsigma $ holds for some vector $ \varsigma \in \mathbb{R} $. Then pre-multiplying both sides of (\ref{compact form_r_b}) by $ \varrho^{\mathrm{T}} $ yields $  \varrho^{\mathrm{T}} \dot{z}= 0 $, which implies that $ \varrho^{\mathrm{T}} \bar{z}= 0 $ under the assumption $ z(0)= \mathbf{0} $. Then, pre-multiplying both sides of (\ref{equilibrium point_r_a}) by $ \varrho^{\mathrm{T}} $ results in $  \mathbf{1}_{N}^{\mathrm{T}}\nabla\tilde{c}(\bar{y}^{r})= \mathbf{0} $. Together with $ \bar{y}^{r}=1_{N}\otimes \varsigma $, one has $ \sum_{i=1}^{N}\nabla c_{i}(\varsigma)=\mathbf{0} $. Note that the optimality condition is $ \sum_{i=1}^{N}\nabla c_{i}(s^{\star})=\mathbf{0} $. One then has $ \bar{y}^{r}=1_{N}\otimes s^{\star} $. It is thus proved that $ \bar{y}_{i}^{r} $ is the same as the optimal solution $ s^{\star} $ of the global cost function.

Now we are ready to show that the equilibrium point $ \bar{Y}=\operatorname{col}(\bar{y}^{r}, \bar{z}) $ of system $ \dot{Y}=\varphi(Y) $ is exponentically stable. To facilitate the analysis, introduce the coordinate transformations $ \tilde{y}^{r}= y^{r}-\bar{y}^{r} $ and $ \tilde{z} = z-\bar{z} $. Then the dynamics of $ \tilde{y}^{r} $ and $ \tilde{z} $ are given as follows,
\begin{subequations} \label{origin dynamics_r}
	\begin{align}
	\dot{\tilde{y}}^{r}= & -R^{-1} \Delta_{c}\left(\tilde{y}^{r}\right)-\beta_{1} \mathcal{L} \tilde{y}^{r}-\beta_{2} \tilde{z}, \label{origin dynamics_r_a}\\
	\dot{\tilde{z}}= & \beta_{1} \mathcal{L} \tilde{y}^{r}, \label{origin dynamics_r_b}
	\end{align}
\end{subequations}
where $ \Delta_{c}\left(\tilde{y}^{r}\right)=\nabla \tilde{c}\left(\tilde{y}^{r}+\bar{y}^{r}\right)-\nabla \tilde{c}\left(\bar{y}^{r}\right) $. Accordingly, the equilibrium point of the dynamics (\ref{origin dynamics_r}) is transferred to the origin. As a consequence, to show the exponential stability of the equilibrium point $ \bar{Y} $, it is sufficient to prove that the origin is an exponentially stable equilibrium point of the dynamics (\ref{origin dynamics_r}).

To this end, consider the Lyapunov function candidate $ V_{0}\left(\tilde{y}^{r}, \tilde{z}\right) =\frac{1}{2} \tilde{y}^{r \operatorname{T}} R \tilde{y}^{r}+\frac{1}{2}\left(\tilde{y}^{r}+\tilde{z}\right)^{\operatorname{T}} R\left(\tilde{y}^{r}+\tilde{z}\right)$. Then the derivative of $ V_{0} $ along the trajectories of dynamics (\ref{origin dynamics_r}) satisfies
\begin{align} \label{dot_V_0_1}
\dot{V}_{0} =& -2 \tilde{y}^{r \operatorname{T}}\Delta_{c}\left(\tilde{y}^{r}\right) - \beta_{1}\tilde{y}^{r \operatorname{T}}R\mathcal{L}\tilde{y}^{r} - 2\beta_{2}\tilde{y}^{r \operatorname{T}}R\tilde{z} \notag\\
& - \tilde{z}^{\operatorname{T}}\Delta_{c}\left(\tilde{y}^{r}\right) - \beta_{2}\tilde{z}^{\operatorname{T}}R\tilde{z}.
\end{align}

Let $ \varpi=\min(\varpi_{1},\varpi_{2},\ldots,\varpi_{N}) $ be the smallest strongly convex coefficient of the cost functions $ c_{i} $'s, $ \bar{\iota}=\max(\iota_{1},\iota_{2},\ldots,\iota_{N}) $ be the largest Lipschitz coefficient of the gradients $ \nabla c_{i} $'s, and $ \underline{\varrho}= \min(\varrho_{1},\varrho_{2},\ldots,\varrho_{N}) $ be the smallest component of the left eigenvector $ \varrho $. Under Assumption \ref{cost function assumption}, one has $ \tilde{y}^{r \operatorname{T}}\Delta_{c}\left(\tilde{y}^{r}\right) \geq \varpi \left\|\tilde{y}^{r}\right\|^{2} $ and $ \left\|\Delta_{c}\left(\tilde{y}^{r}\right)\right\| \leq \bar{\iota}\left\|\tilde{y}^{r}\right\| $.  Thus, the following inequality is satisfied,
\begin{equation*}
- \tilde{z}^{\operatorname{T}}\Delta_{c}\left(\tilde{y}^{r}\right)  \leq \dfrac{1}{4\delta}\|\Delta_{c}\left(\tilde{y}^{r}\right)\|^{2} + \delta\|\tilde{z}\|^{2} \leq \dfrac{\bar{\iota}^{2}}{4\delta} \|\tilde{y}^{r}\|^{2} + \delta\|\tilde{z}\|^{2},
\end{equation*}
where $ \delta $ is any positive constant. Moreover, it follows from Lemma \ref{graph theory lemma} that $ \tilde{y}^{r \operatorname{T}}R\mathcal{L}\tilde{y}^{r} = \tilde{y}^{r \operatorname{T}}\bar{\mathcal{L}}\tilde{y}^{r} $ and $ \tilde{z}^{\operatorname{T}}R\tilde{z}\geq \underline{\varrho} \|\tilde{z}\|^{2} $. 
With these results, (\ref{dot_V_0_1}) can be rewritten as follows,
\begin{align}\label{dot_V_0_2}
\dot{V}_{0} \leq & -\Big(2 \varpi- \dfrac{\bar{\iota}^{2}}{4\delta} \Big)\left\|\tilde{y}^{r}\right\|^{2} - (\beta_{2}\underline{\varrho}-\delta ) \|\tilde{z}\|^{2}  \notag\\
& - \beta_{1}\tilde{y}^{r \operatorname{T}}\bar{\mathcal{L}}\tilde{y}^{r} - 2\beta_{2}\tilde{y}^{r \operatorname{T}}R\tilde{z}.
\end{align}

It then follows from \romannumeral2) of Lemma \ref{graph theory lemma} that there exist orthogonal vectors $ \mathbf{1}_{N} $ and $ \zeta_{i},~i=2,3,\ldots,N $, such that $ \bar{\mathcal{L}}\mathbf{1}_{N}=\mathbf{0} $ and $ \bar{\mathcal{L}}\zeta_{i}=\lambda_{i}\zeta_{i} $. Define the matrix $ \varPhi=\left(\mathbf{1}_{N},\zeta_{2},\zeta_{3},\ldots,\zeta_{N} \right)\in \mathbb{R}^{N\times N} $. Let $ \theta=\operatorname{col}\left( \theta_{1},\theta_{2},\ldots,\theta_{N} \right)\in \mathbb{R}^{N} $ with constants $ \theta_{1}= \mathbf{1}_{N}^{\operatorname{T}} \tilde{y}^{r} $ and $ \theta_{i}= \zeta_{i}^{\operatorname{T}} \tilde{y}^{r} $, $i=2,3,\ldots,N $.  It then can be verified that $ \tilde{y}^{r} = \varPhi \theta $. Therefore, one can further deduce that  
\begin{align} \label{inequality_1.1}
\tilde{y}^{r T}\bar{\mathcal{L}}\tilde{y}^{r} 
&=\operatorname{tr}\big(\theta^{\mathrm{T}} \varPhi^{\mathrm{T}} \bar{\mathcal{L}} \varPhi \theta\big) =\operatorname{tr}\big( \varPhi^{\mathrm{T}} \bar{\mathcal{L}} \varPhi \theta \theta^{\mathrm{T}}\big)  \notag\\
&=\sum_{i=2}^{N} \zeta_{i}^{\mathrm{T}} \bar{\mathcal{L}} \zeta_{i} \theta_{i}^{\mathrm{T}} \theta_{i} = \sum_{i=2}^{N} \lambda_{i} \zeta_{i}^{\mathrm{T}} \zeta_{i} \theta_{i}^{\mathrm{T}} \theta_{i} \notag\\[-1mm]
& \geq \lambda_{2} \sum_{i=2}^{N} \theta_{i}^{\mathrm{T}} \theta_{i} =\lambda_{2}\|\nu\|^{2},
\end{align}
where $ \nu=\operatorname{col}(\theta_{2},\theta_{3},\ldots,\theta_{N}) $. Similarly, one has
\begin{align} \label{y_xi_z_1}
\tilde{y}^{r \operatorname{T}}R\tilde{z} 
&=\tilde{z}^{\operatorname{T}}R\tilde{y}^{r}  = \tilde{z}^{\operatorname{T}} R \varPhi \theta  \notag\\
&=\tilde{z}^{\operatorname{T}} R \mathbf{1}_{N}\theta_{1}  + \tilde{z}^{\operatorname{T}} R \varPsi,
\end{align}
where $ \varPsi = \sum_{i=2}^{N} \zeta_{i}\theta_{i} $. Since $ \zeta_{i}$, $i=2,3,\ldots,N $ are orthogonal vectors, it then can be obtained that $ \| \varPsi  \|^{2} = \|\nu\|^{2} $. Note that $ \tilde{z}^{\operatorname{T}} R \mathbf{1}_{N}= \varrho^{\mathrm{T}} \tilde{z} = \mathbf{0} $. It then follows from (\ref{y_xi_z_1}) and $ \varrho_{i} < 1 $, $ i=1,2,\ldots,N $ that for any positive constant $ \delta $, 
\begin{align}\label{inequality_1.2}
- 2\beta_{2}\tilde{y}^{r \operatorname{T}}R\tilde{z} & \leq \dfrac{\beta_{2}^{2}}{\delta} \| \varPsi  \|^{2} + \delta \| \tilde{z}\|^{2}  \notag\\
& = \dfrac{\beta_{2}^{2}}{\delta} \| \nu \|^{2} + \delta \| \tilde{z}\|^{2}.
\end{align}

Substituting (\ref{inequality_1.1}) and (\ref{inequality_1.2}) into (\ref{dot_V_0_2}) leads to
\begin{equation*}
\begin{aligned}
\dot{V}_{0} \leq & -\left(2 \varpi-\frac{\bar{\iota}^{2}}{4\delta}\right)\left\|\tilde{y}^{r}\right\|^{2}-\left(\beta_{2} \underline{\varrho}-2\delta\right)\|\tilde{z}\|^{2} \\
& -\left(\beta_{1} \lambda_{2}-\frac{\beta_{2}^{2}}{\delta}\right)\|\nu\|^{2}.
\end{aligned}
\end{equation*}
Then, successively choose the constants $ \delta $, $ \beta_{2} $ and $ \beta_{1} $ such that the following inequalities are satisfied,
\begin{equation}\label{paremeters_beta_i}
	2 \varpi-\dfrac{\bar{\iota}^{2}}{4\delta} > 0,\quad \beta_{2} \underline{\varrho}-2\delta > 0, \quad \beta_{1} \lambda_{2}-\dfrac{\beta_{2}^{2}}{\delta} > 0.
\end{equation}
One thus has 
\begin{equation} \label{dot_V_0_final}
\dot{V}_{0} \leq-\mu_{0}\left(\left\|\tilde{y}^{r}\right\|^{2}+\|\tilde{z}\|^{2}\right),
\end{equation}
where $ \mu_{0} = \min\big\{2 \varpi-\frac{\bar{\iota}^{2}}{4\delta},~ \beta_{2} \underline{\varrho}-2\delta,~ 1 \big\} $. It is noted that $ V_{0}(\tilde{y}^{r}, \tilde{z}) $ can be rewritten as
$
V_{0}=\left(\begin{array}{c}
	\tilde{y}^{r} \\
	\tilde{z} 
\end{array}\right)^{\mathrm{T}} F\left(\begin{array}{c}
	\tilde{y}^{r} \\
	\tilde{z} 
\end{array}\right)	
$ with
$
F=\dfrac{1}{2}\left(\begin{array}{cc}
	2 & 1  \\
	1 & 1
\end{array}\right)\otimes R	
$. Let $ \varepsilon $ denote the maximum eigenvalue of $ F $ . One then has $ V_{0}\leq \varepsilon\big(\left\|\tilde{y}^{r}\right\|^{2}+\|\tilde{z}\|^{2}\big) $. Thus, (\ref{dot_V_0_final}) can be rewritten as $ \dot{V}_{0} \leq -\frac{\mu_{0}}{\varepsilon}V_{0} $. It can be concluded that the origin is an exponentially stable equilibrium point of the dynamics (\ref{origin dynamics_r}). This indicates that the equilibrium point $ (\bar{y}^{r}, \bar{z}) $ of system $ \dot{Y}=\varphi(Y) $ is exponentially stable. \\[-2mm]

\textbf{Step 2.} Establish the exponential stability of system $ \dot{Y}=\varphi(Y)+\psi(Y) $. To this end, we rewrite the perturbation term $ \psi(Y) $ as $ \psi(Y)=\psi_{1}(Y)+\psi_{2}(t) $, where
\begin{align}
	\psi_{1}(Y)&=\Bigg\{\begin{aligned}
		&(R^{-1}-\Xi^{-1}) \big(\nabla \tilde{c}(y^{r})-\nabla \tilde{c}(\bar{y}^{r})\big), \\
		&~~\mathbf{0},
	\end{aligned}\\
	\psi_{2}(t)&=\Bigg\{\begin{aligned}
	&(R^{-1}-\Xi^{-1})\nabla \tilde{c}(\bar{y}^{r}), \\
	&~~\mathbf{0}.
\end{aligned}	
\end{align}  

Note that the perturbation term $ \psi_{1}(Y) $ satisfies $ \psi_{1}(\bar{Y})= \mathbf{0} $ and $ \psi_{1}(Y) \leq \sigma_{1}(t) \|Y-\bar{Y}\| $ with $ \sigma_{1}(t)= \bar{\iota} \max_{i}| \varrho_{i}^{-1}-(\xi_{i}^{i}(t))^{-1} | $. Since it is proved that $ \lim_{t\to\infty} \xi_{i}^{i}(t) = \varrho_{i} $ exponentially, one has $ \lim_{t\to\infty} \sigma_{1}(t)= 0 $ exponentially. Then it follows from Lemma \ref{lemma 4} that the equilibrium point $ \bar{Y} $ of the perturbed system $ \dot{Y}=\varphi(Y)+\psi_{1}(Y) $ is exponentially stable. 

Furthermore, since $ \nabla\tilde{c} $ is globally Lipschitz by Assumption \ref{cost function assumption}, we learn that $  \dot{Y}=\varphi(Y)+\psi_{1}(Y)+\psi_{2}(t) $ is globally Lipschitz in $ Y $. The boundness of $ \nabla \tilde{c}(\bar{y}^{r}) $ suggests that $ \psi_{2}(t) $ is bounded. Then it follows from Lemma 4.6 in \cite{khalil2002nonlinear} that the system $  \dot{Y}=\varphi(Y)+\psi_{1}(Y)+\psi_{2}(t) $ is input-to-state stable (ISS). Note  also that $ \lim_{t\to\infty} \psi_{2}(t) = \mathbf{0} $ exponentially. It can be shown that $ Y $ exponentially converges to $ \bar{Y} $ by the property of ISS given in \cite{khalil2002nonlinear}. Therefore, we obtain that $ y^{r} $ exponentially converges to $ \bar{y}^{r}= \mathbf{1}_{N}\otimes s^{*} $, with $ s^{*} $ being the optimal solution to minimize the global cost function. The proof of Theorem \ref{proposition1} is thus completed.\qed

\section{Proof of Lemma \ref{lemma_chi_i}} \label{Appendix_2}

First, choose the positive definite function $ V_{i\bar{x}}=\bar{x}_{i 1}^{2} $. Then the derivative of $ V_{i\bar{x}} $ along the trajectories of dynamics (\ref{augmented system 2}) is given as follows,
\begin{align} \label{dot_V_ix_1}
\dot{V}_{i\bar{x}} = & 2 \bar{x}_{i 1} \dot{\bar{x}}_{i 1}= -2 \gamma\bar{x}_{i 1}^{2}+2 \bar{x}_{i 1} \vartheta_{i}-2 \bar{x}_{i 1} \dot{y}_{i}^{r} \notag\\
\leq & -2(\gamma-1)\bar{x}_{i 1}^{2}+\vartheta_{i}^{2}+\big|\dot{y}_{i}^{r}\big|^{2}.
\end{align}
Letting $ \gamma \geq \frac{3}{2} $ yields
\begin{align*}
\dot{V}_{i\bar{x}} \leq -\bar{x}_{i 1}^{2}+\vartheta_{i}^{2}+\big|\dot{y}_{i}^{r}\big|^{2}.
\end{align*}
Then by applying the changing supply function technique in \cite{sontag1995changing}, given any smooth function $ \Delta_{i\bar{x}}(\bar{x}_{i 1})>0 $, there exists a continuously differentiable function $\bar{V}_{i\bar{x}}(\bar{x}_{i 1})$ such that, along the trajectories of (\ref{augmented system 2}), the following inequalities are satisfied,
\begin{align}
&\underline{\alpha}_{i\bar{x}}(\|\bar{x}_{i 1}\|) \leq \bar{V}_{i\bar{x}}(\bar{x}_{i 1}) \leq \overline{\alpha}_{i\bar{x}}(\|\bar{x}_{i 1}\|), \\
& \dot{\bar{V}}_{i\bar{x}} \leq -\Delta_{i\bar{x}}(\bar{x}_{i 1})\bar{x}_{i 1}^{2} + \phi_{i\vartheta}\left(\vartheta_{i}\right)\vartheta_{i}^{2} + \phi_{ir}\left(\dot{y}_{i}^{r}\right)\big|\dot{y}_{i}^{r}\big|^{2}, \label{dot_bar_V_bar_x_i1}
\end{align}
for some known smooth functions $ \underline{\alpha}_{i\bar{x}}, \overline{\alpha}_{i\bar{x}}\in \mathcal{K}_{\infty} $ and $ \phi_{i\vartheta}, \phi_{ir} >1 $.

Second, since $ M_{i} $ is Hurwitz, there exists a unique positive definite matrix $ P_{i} $ satisfying $P_{i} M_{i}+M_{i}^{\operatorname{T}} P_{i}=-I$. Consider the positive definite function $V_{i\tilde{\eta}}=2 \tilde{\eta}_{i}^{\operatorname{T}} P_{i} \tilde{\eta}_{i}$. Its derivative along the trajectories of (\ref{augmented system 2}) is given as
\begin{align} \label{dot_V_i_tilde_eta_1}
\dot{V}_{i\tilde{\eta}} = & -2\left\|\tilde{\eta}_{i}\right\|^{2}+4 \tilde{\eta}_{i}^{\operatorname{T}} P_{i} \tilde{f}_{i 1}-4 b_{i}^{-1} \tilde{\eta}_{i}^{\operatorname{T}} P_{i} N_{i} \epsilon_{i} \notag\\
\leq &  -\left\|\tilde{\eta}_{i}\right\|^{2}+8\left\|P_{i}\right\|^{2}\big\|\tilde{f}_{i 1}\big\|^{2}\!+\!8b_{i}^{-2}\left\|P_{i} N_{i}\right\|^{2}\|\epsilon_{i}\|^{2}.
\end{align}

Note that $ \tilde{f}_{i 1}\left(\bar{x}_{i 1}, \vartheta_{i}, y_{i}^{r}, d\right) $ is sufficiently smooth and satisfies $\tilde{f}_{i 1}\left(0,0, y_{i}^{r}, d\right)=0$. By Lemma 11.1 in \cite{chen2015stabilization}, there exist known smooth functions $ \bar{\phi}_{i\bar{x}}, \bar{\phi}_{i\vartheta} >1$ and unknown constant $ \bar{\gamma}_{i1}>1 $ such that, for all $ y_{i}^{r}\in\mathbb{R} $ and $ d\in\mathbb{D} $,  the following inequality holds,
\begin{align*}
\big\|\tilde{f}_{i 1}\big\|^{2} \leq \bar{\gamma}_{i 1} \left(\bar{\phi}_{i\bar{x}}\left(\bar{x}_{i 1}\right)\bar{x}_{i 1}^{2}+\bar{\phi}_{i\vartheta}\left(\vartheta_{i}\right)\vartheta_{i}^{2}\right).
\end{align*}
Therefore, (\ref{dot_V_i_tilde_eta_1}) can be rewritten as follows,
\begin{align} \label{dot_V_tilde_eta_i}
\dot{V}_{i\tilde{\eta}} \leq & -\left\|\tilde{\eta}_{i}\right\|^{2} +8 \bar{\gamma}_{i 1}\left\|P_{i}\right\|^{2} \bar{\phi}_{i\bar{x}}\left(\bar{x}_{i 1}\right) \bar{x}_{i 1}^{2} \notag\\
&+8 \bar{\gamma}_{i 1}\left\|P_{i}\right\|^{2} \bar{\phi}_{i\vartheta}\left(\vartheta_{i}\right) \vartheta_{i}^{2} +8b_{i}^{-2}\left\|P_{i} N_{i}\right\|^{2}\|\epsilon_{i}\|^{2}.
\end{align}

Last, consider the positive definite function $V_{i 1}(\chi_{i})=\hbar_{i} \bar{V}_{i\bar{x}}(\bar{x}_{i 1})+V_{i\tilde{\eta}}({\eta}_{i})$, where $ \hbar_{i} $ is a positive constant to be determined later. Then there exist known smooth functions $ \underline{\alpha}_{i\chi}, \overline{\alpha}_{i\chi}\in \mathcal{K}_{\infty} $ such that $ \underline{\alpha}_{i\chi}(\|\chi_{i}\|) \leq V_{i1}(\chi_{i}) \leq \overline{\alpha}_{i\chi}(\|\chi_{i}\|) $ is satisfied. Moreover, by combining (\ref{dot_bar_V_bar_x_i1}) and (\ref{dot_V_tilde_eta_i}), the derivative of $ V_{i 1} $ along the trajectories of dynamics (\ref{augmented system 2}) satisfies
\begin{align} \label{dot_V_i1_1}
\dot{V}_{i 1}\leq &-\left(\hbar_{i} \Delta_{i\bar{x}}\left(\bar{x}_{i 1}\right)-8 \bar{\gamma}_{i 1}\left\|P_{i}\right\|^{2} \bar{\phi}_{i\bar{x}}\left(\bar{x}_{i 1}\right)\right)\bar{x}_{i 1}^{2}-\left\|\tilde{\eta}_{i}\right\|^{2} \notag\\
&+\left(\hbar_{i} \phi_{i\vartheta}\left(\vartheta_{i}\right)+8 \bar{\gamma}_{i 1}\left\|P_{i}\right\|^{2} \bar{\phi}_{i\vartheta}\left(\vartheta_{i}\right)\right)\vartheta_{i}^{2}\notag\\
&+\hbar_{i} \phi_{ir}\left(\dot{y}_{i}^{r}\right)\left|\dot{y}_{i}^{r}\right|^{2}+8b_{i}^{-2}\left\|P_{i} N_{i}\right\|^{2}|\epsilon_{i}|^{2}.
\end{align}
Let $ \hbar_{i}\geq 8 \bar{\gamma}_{i 1}\left\|P_{i}\right\|^{2}+1 $, $ \Delta_{i\bar{x}}\left(\bar{x}_{i 1}\right)\geq \bar{\phi}_{i\bar{x}}\left(\bar{x}_{i 1}\right)+1 $, $\gamma_{i\vartheta} \geq \hbar_{i} +8 \bar{\gamma}_{i 1}\left\|P_{i}\right\|^{2}$, $\hat{\phi}_{i\vartheta}\left(\vartheta_{i}\right) \geq \phi_{i\vartheta}\left(\vartheta_{i}\right)+\bar{\phi}_{i\vartheta}\left(\vartheta_{i}\right)$, $ \gamma_{ir}\geq \hbar_{i}+1 $, $ \hat{\phi}_{ir}\left(\dot{y}_{i}^{r}\right) \geq \phi_{ir}\left(\dot{y}_{i}^{r}\right) $, $ \gamma_{i\epsilon}\geq 8b_{i}^{-2}\left\|P_{i} N_{i}\right\|^{2} + 1 $. Then (\ref{dot_V_i1_1}) can be rewritten as
\begin{align*} \label{dot_V_i1_2}
\dot{V}_{i1} \leq & -\chi_{i}^{2} +\gamma_{i\vartheta} \hat{\phi}_{i\vartheta}\left(\vartheta_{i}\right) \vartheta_{i}^{2} +\gamma_{ir} \hat{\phi}_{ir}\left(\dot{y}_{i}^{r}\right)\left|\dot{y}_{i}^{r}\right|^{2}  +\gamma_{i\epsilon}|\epsilon_{i}|^{2}.
\end{align*}

The proof of Lemma \ref{lemma_chi_i} is thus completed.\qed

\end{appendices}

\bibliographystyle{IEEEtran}  
\bibliography{IEEEabrv,mylib}

\end{document}